\documentclass[reqno]{amsproc}
\usepackage[utf8]{inputenc}
\usepackage{orcidlink}
\usepackage{amsfonts}
\usepackage{graphicx}
\usepackage{amscd}
\usepackage{url}
\usepackage[normalem]{ulem}
\usepackage{lmodern}
\usepackage{amsmath,amssymb,amsthm}
\usepackage{latexsym}
\usepackage{hyperref}
\usepackage{xcolor}
\usepackage[all]{xy}
\usepackage{color}
\usepackage{mathrsfs}
\makeatletter
\patchcmd\@thm{\let\thm@indent\indent}{\let\thm@indent\noindent}%
  {}{}
\makeatother
\theoremstyle{plain}
\newtheorem{theorem}{\bf Theorem}
\newtheorem{lemma}{\bf Lemma}

\newtheorem{proposition}{\bf Proposition}

\theoremstyle{definition}
\newtheorem{remark}{\bf Remark}
\newtheorem{example}{\bf Example}

\usepackage{amsmath,amssymb,amsthm}
\usepackage[english]{babel}
\usepackage{enumerate}
\hypersetup{
  colorlinks  = true,
  linkcolor   = blue,
  anchorcolor = red,
  citecolor   = blue,
  filecolor   = red,
  urlcolor    = red,
  pdfauthor   = author
}
\usepackage{mathtools}
\usepackage{xcolor}
\usepackage{xparse}
\DeclarePairedDelimiter\norm{\lVert}{\rVert}

\NewDocumentCommand{\gradient}{m d<>}{
  \IfNoValueTF{#2}{
    \nabla{#1}
  }{
    \nabla_{#2}{#1}
  }
}

\NewDocumentCommand{\hessian}{m d<> d() d[]}{
  \IfNoValueTF{#2}{
    \nabla{d{#1}}
  }{
    \nabla_{#2}{d{#1}}
  }
  \IfValueTF{#3}{
    \IfValueTF{#4}{
      \left({#3},{#4}\right)
    }{
      \left({#3},\cdot\right)
    }
  }{
    \IfValueT{#4}{
      \left(\cdot,{#4}\right)
    }
  }
}

\NewDocumentCommand{\laplacian}{m d<>}{
  \IfNoValueTF{#2}{
    \Delta{#1}
  }{
    \left(\Delta{#1}\right)_{#2}
  }
}

\NewDocumentCommand{\driftedlaplacian}{m m d<>}{
  \IfNoValueTF{#3}{
    \Delta_{#2}{#1}
  }{
    \left(\Delta_{#2}{#1}\right)_{#3}
  }
}

\NewDocumentCommand{\riccitensor}{d<> d() d[]}{
  \IfValueTF{#1}{
    \textup{Ric}_{#1}
  }{
    \textup{Ric}
  }
  \IfValueTF{#2}{
    \IfValueTF{#3}{
      \left({#2},{#3}\right)
    }{
      \left({#2},\cdot\right)
    }
  }{
    \IfValueT{#3}{
      \left(\cdot,{#3}\right)
    }
  }
}

\NewDocumentCommand{\scalarcurvature}{d<>}{
  \IfNoValueTF{#1}{
    R
  }{
    R_{#1}
  }
}

\NewDocumentCommand{\bakryemerytensor}{m m d<> d() d[]}{
  \IfNoValueTF{#3}{
    \textup{Rc}^{{#1},{#2}}
  }{
    \textup{Rc}^{{#1},{#2}}_{#3}
  }
  \IfValueTF{#4}{
    \IfValueTF{#5}{
      \left({#4},{#5}\right)
    }{
      \left({#4},\cdot\right)
    }
  }{
    \IfValueT{#5}{
      \left(\cdot,{#5}\right)
    }
  }
}
\begin{document}
\title[Gradient estimates for a parabolic PDE under the RBF]{Gradient estimates for a parabolic partial differential equation under the Ricci-Bourguignon flow}

\author[José N.V. Gomes]{José N.V. Gomes\orcidlink{0000-0001-5678-4789}}
\author[Willian I. Tokura]{Willian I. Tokura\orcidlink{0000-0001-9363-793X}}
\author[Hikaru Yamamoto]{Hikaru Yamamoto\orcidlink{0000-0002-7047-2914}}

\address{Departamento de Matemática, Universidade Federal de São Carlos, Rod. Washington Luís, Km 235, 13565-905, São Carlos, São Paulo, Brazil}
\email{jnvgomes@ufscar.br}
\urladdr{https://www.ufscar.br}

\address{Faculdade de Ciências Exatas e Tecnologia, Universidade Federal da Grande Dourados, Mato Grosso do Sul, Brazil.}
\email{williantokura@ufgd.edu.br}
\urladdr{https://portal.ufgd.edu.br}

\address{Department of Mathematics, Faculty of Pure and Applied Science, University of Tsukuba, 1-1-1 Tennodai, Tsukuba, Ibaraki 305-8571, Japan.}
\email{hyamamoto@math.tsukuba.ac.jp}
\urladdr{https://nc.math.tsukuba.ac.jp}

\keywords{Ricci flow; Ricci-Bourguignon flow; Warped metric; Gradient estimate; Rigidity results}
\subjclass[2020]{53C15; 53C21; 53C24; 53C25}

\begin{abstract} 
We study the Ricci-Bourguignon flow on warped product manifolds with noncompact base. This setting leads naturally to a parabolic partial differential equation on the space of smooth warping functions, arising from the necessary and sufficient conditions for a warped metric to evolve under the flow. One of our main results establishes a gradient estimate for this equation, providing the analytic input for the geometric applications developed herein and, in particular, recovering classical gradient estimates for the heat equation under the Ricci flow. Furthermore, we develop a method for constructing explicit warped product solutions to the Ricci-Bourguignon flow and present examples that illustrate the scope and geometric relevance of our results.
\end{abstract}
\maketitle

\section{Introduction and statements of the main results}\label{intro}

A one-parameter family of Riemannian metrics $\overline{g}(t)$ on an $n$-dimensional smooth manifold $M^n$ is a solution to the Ricci-Bourguignon flow if it satisfies the evolution equation
\begin{equation}\label{Eq-RBF}
\frac{\partial}{\partial  t} \overline{g}(t)=-2\big(\textup{Ric}_{\overline{g}(t)}-\rho S_{\overline{g}(t)}\overline{g}(t)\big),\quad \overline{g}(0)=\overline{g}_0,
\end{equation}
for an initial Riemannian metric $\overline{g}_0$ and some real constant $\rho$, where $\textup{Ric}_{\overline{g}(t)}$ and $S_{\overline{g}(t)}$ are the Ricci curvature tensor and the scalar curvature of the Riemannian manifold $(M^n,\overline{g}(t))$, respectively.

The Ricci-Bourguignon flow is only weakly parabolic for $\rho < 1/2(n-1)$. Nevertheless, by DeTurck’s trick~\cite{deturck1983deforming}, the Ricci-Bourguignon flow is equivalent to a strictly parabolic flow, in the sense that their solutions are equivalent modulo time-dependent diffeomorphisms. This approach has been used by G. Catino et al.~\cite{catino2017ricci} to prove the existence of a short-time solution for \eqref{Eq-RBF}. DeTurck’s trick applied to~\eqref{Eq-RBF} provides
\begin{equation}\label{Eq-RBFD}
\frac{\partial}{\partial  t} \overline{g}(t)=-2\big(\textup{Ric}_{\overline{g}(t)}+\nabla_{\overline{g}(t)}^2\varphi-\rho S_{\overline{g}(t)}\overline{g}(t)\big),
\end{equation}
where $\nabla_{\overline{g}(t)}^2\varphi$ denotes the Hessian of $\varphi(\cdot,t)$ taken with respect to the Levi-Civita connection of $\bar{g}(t)$, for each fixed $t$. For convenience, we will refer to the resulting flow~\eqref{Eq-RBFD} as the Ricci-Bourguignon flow.

In the special case of warped metrics, the Ricci-Bourguignon flow is naturally associated with an interesting parabolic differential equation on the space of smooth warping functions defined on the base of a product manifold, as we shall describe now. Let $B^n \times F^m$ be a Riemannian product manifold, where $(F^m,g_F)$ is an $m$-dimensional Einstein manifold with scalar curvature $S_F$.  Given a one-parameter family of Riemannian metrics $g(t)$ and a family of positive smooth warping functions $f(t)$, both defined on $B^n$, we consider the family of warped metrics
\begin{equation}\label{Eq:warped-metric}
\overline{g}(t)=g(t)+f(t)^2 g_F,
\end{equation}
evolving according to~\eqref{Eq-RBFD}. To describe the evolution of the warping function, set
\begin{equation*}
u(x,t)=f(x,t)^{\frac{1}{\sigma}},
\qquad \mbox{where}\qquad
\sigma=\frac{1-2m\rho}{m-\rho m^2-m\rho}.
\end{equation*}
Combining~\eqref{Eq-RBFD} with \eqref{Eq:warped-metric}, we obtain that $u$ satisfies the following parabolic equation on $B^n:$
\begin{equation}\label{PDE-parabolic}
\dfrac{\partial u}{\partial t}(x,t)=\Delta_{\varphi} u(x,t)-2m\rho\Delta u(x,t)+\frac{\rho}{\sigma}S_{g(t)} u(x,t)+\frac{m\rho-1}{m\sigma}S_{F}u(x,t)^{1-2\sigma},
\end{equation}
where $S_{g(t)}$ denotes the scalar curvature of $g(t)$ and $\Delta_{\varphi}$ is the
drifted Laplacian associated with $\varphi$. It is worth emphasizing that equation~\eqref{PDE-parabolic} arises naturally from the necessary and sufficient conditions for constructing warped solutions $\bar{g}(t)$ to the Ricci-Bourguignon flow (see Theorem~\ref{thm1}). 

\begin{remark}
Equation~\eqref{PDE-parabolic} can be viewed as a time-dependent version of equation~(2) in~\cite{gomes2025gradient}, which arises in the study of $\rho$-Einstein soliton warped products. Moreover, as Theorem~\ref{thm1} makes clear, the base manifold provides the natural setting for studying solutions to the Ricci-Bourguignon flow on warped products. Indeed, the evolution of the full system on $(B^n\times F^m,\overline{g}(t))$ is equivalent to a coupled evolution system on $(B^n,g(t))$. In particular, any solution of~\eqref{PDE-parabolic} on the base admits a canonical lift to the product. More precisely, if $\pi:B^n\times F^m\to B^n$ denotes the canonical projection and $\tilde{u}=u\circ\pi$, then for every $p=(x,y)\in B^{n}\times F^{m}$ and time $t$, one has
\begin{equation}\label{EQ:WP}
\dfrac{\partial \tilde{u}}{\partial t}(p,t)=\Delta_{\psi}\tilde{u}(p,t)-2m\rho\Delta\tilde{u}(p,t)+\dfrac{\rho}{\sigma} \widetilde{S_{g(t)}}\tilde{u}(p,t)+\dfrac{m\rho-1}{m\sigma}S_{F
}\tilde{u}(p,t)^{1-2\sigma},
\end{equation}
where 
$\psi:=(1-2m\rho)m\sigma\ln \tilde{u}+\tilde{\varphi}$.
\end{remark}

R. Hamilton~\cite{hamilton1995formation} studied the special case of the Ricci flow coupled with the harmonic map heat flow. The coupling introduces a heat-type diffusion term, which improves the parabolic regularization and leads to finer curvature control. This diffusion term also appears in the warped setting considered here. For instance, if the fiber $F^{m}$ is Ricci-flat (or $m=1$), then the curvature term $S_{F}$ vanishes and, from \eqref{PDE-parabolic}, the positive function $u=f^{m}$ satisfies the classical linear heat equation
\begin{equation}\label{Heat-Eq} 
\dfrac{\partial u}{\partial t}(x,t)=\Delta u(x,t).
\end{equation}
Hence, the deformation of the base geometry under the Ricci flow naturally induces a diffusive mechanism on the warping function. In fact, interpreting $(B^n,g(0))$ as a medium with initial temperature $u(\cdot,0)$, the solution of the heat equation gives $u(x,t)$ as the temperature at $x$ and time $t$ while the metric $g(t)$ evolves along the Ricci flow. This physical interpretation has already been observed by M.~Bailesteanu, X.~Cao and A.~Pulemotov~\cite{bailesteanu2010gradient}, who established a gradient estimate for positive solutions to the heat equation evolving under the Ricci flow.

In the case of the Ricci-Bourguignon flow~\eqref{Eq-RBF}, it follows from~\eqref{PDE-parabolic} that the positive function $u=f^{\frac{1}{\sigma}}$ evolves according to the parabolic Yamabe-type equation
\begin{equation}\label{EQ:YAMABE}
\frac{\partial u}{\partial t}(x,t)=(1-2m\rho)\Delta u(x,t)+\frac{\rho}{\sigma} S_{g(t)}u(x,t)+\frac{m\rho-1}{m\sigma}S_{F
}u(x,t)^{1-2\sigma},
\end{equation}
in which the exponent $1-2\sigma$ of the nonlinear term is related to the Yamabe critical Sobolev exponent, and thus reflects the conformal invariance mechanism underlying the classical Yamabe equation. From this perspective, \eqref{EQ:YAMABE} represents a parabolic warped product formulation of the classical prescribed scalar curvature problem. For further details, see F.~Dobarro and E.~Dozo~\cite{dobarro1987scalar}.

The novelty of~\eqref{PDE-parabolic} stems from the wide range of behaviors described by the exponent $1-2\sigma$, which may take arbitrary real values as $\rho$ varies. Consequently, the equation encompasses superlinear ($\sigma<0$), sublinear ($0<\sigma<1/2$), and even singular regimes ($\sigma>1/2$), whose behavior differs substantially and, outside the superlinear Yamabe case, remains largely unexplored.

With the framework established in Sections~\ref{NSCWMSRBF} and \ref{CWMSRBF}, we prove a gradient estimate for positive smooth solutions $u(x,t)$ of a unified parabolic equation that includes \eqref{PDE-parabolic}, \eqref{EQ:WP}, \eqref{Heat-Eq} and \eqref{EQ:YAMABE} as particular cases. 

\begin{theorem}\label{main} Let
$g(t)$, $t\in[t_{0}-T,t_{0}]$, be a family of complete Riemannian metrics on a smooth manifold $M^{n}$ satisfying 
\begin{equation*}
(1-a)\textup{Ric}_{g(t)}+\nabla^{2}_{g(t)}\varphi\geqslant-k_{1}g(t),\quad \frac{\partial}{\partial t} g(t)\geqslant-2k_{2}g(t),\quad k_{1}, k_{2}\geqslant0,\quad 1>a,
\end{equation*}
on the set 
\begin{equation*}
\mathcal{Q}_{R, T}:=\left\{(x, t) \in M \times[t_{0}-T,t_{0}] \mid r\left(x, t\right) \leqslant R\right\},\qquad R\geqslant2,
\end{equation*}
where $r(x,t)$ denotes the distance function from $x_{0}$ with respect to $g(t)$. Let $u(x,t)$ be a positive solution to 
\begin{equation}\label{Eq-Parabolic*}
\frac{\partial u}{\partial t}(x,t)=\Delta_{\varphi} u(x,t)-a\Delta u(x,t)+b(x,t)u(x,t)+cu(x,t)^{\alpha},
\end{equation}
with $0<u<D$ on $\mathcal{Q}_{R, T}$. Then, there exists a constant $C$ such that
\begin{equation}\label{gradient-estimates}
\begin{split}
&\|\nabla \ln u\|\leqslant C\!\left(q-\ln u^{p}\right)\Bigg{\{}k_{1}^{\frac{1}{2}}+k_{2}^{\frac{1}{2}}+\dfrac{1}{R}+\dfrac{1}{(t-t_{0}+T)^{\frac{1}{2}}}+\frac{[(\Gamma_{\overline{\varphi}})^{+}]^{\frac{1}{2}}}{R^{\frac{1}{2}}}\\
&\quad+\sup_{\mathcal{Q}_{R,T}}\|\nabla b\|^{\frac{1}{3}}+\left(\sup_{\mathcal{Q}_{R,T}}b^{+}\right)^{\frac{1}{2}}\!\!+\sup_{\mathcal{Q}_{R,T}}\left\{\left[\left(\alpha-1+\frac{p}{q-\ln u^{p}}\right)c \right]^{+}\right\}^{\frac{1}{2}}u^{\frac{\alpha-1}{2}}\Bigg{\}}
\end{split}
\end{equation}
on $\mathcal{Q}_{R/2,T}$, where $q$ and $p>0$ are constants chosen so that $q-\ln u^{p}\geqslant\delta>0$ for some constant $\delta$, $s^{+}=\max\{s,0\}$ and 
\begin{equation*}
\Gamma_{\overline{\varphi}}=\sup_{(x, t)\in\mathcal{Q}_{R, T}}\left\{\Delta_{\overline{\varphi}} r(x, t) \mid  r(x,t)=1\right\},\qquad \overline{\varphi}=\frac{\varphi}{1-a}.
\end{equation*}
\end{theorem}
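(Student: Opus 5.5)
Since $\Delta_\varphi u - a\Delta u = (1-a)\Delta_{\overline{\varphi}}u$ with $\overline{\varphi}=\varphi/(1-a)$, equation \eqref{Eq-Parabolic*} reads $\partial_t u=(1-a)\Delta_{\overline{\varphi}}u+bu+cu^\alpha$. The hypothesis $(1-a)\textup{Ric}+\nabla^2\varphi\geqslant -k_1 g$ is exactly a Bakry--Émery lower bound $(1-a)\textup{Ric}_{\overline{\varphi}}\geqslant -k_1 g$ for the operator $L=(1-a)\Delta_{\overline{\varphi}}$. The plan is a Souplet--Zhang type argument, in the time-dependent form of Bailesteanu--Cao--Pulemotov, with the auxiliary function adapted to the bound $q-\ln u^p\geqslant\delta$. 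Set $h=\ln u$, so that
\begin{equation*}
\partial_t h=(1-a)\big(\Delta_{\overline{\varphi}}h+\|\nabla h\|^2\big)+b+ce^{(\alpha-1)h}.
\end{equation*}
Then set $w=\|\nabla \ln(q-ph)\|^2=p^2\|\nabla h\|^2/(q-ph)^2$. The goal is the estimate $w\leqslant C\{\cdots\}^2$ on $\mathcal{Q}_{R/2,T}$, which is equivalent to \eqref{gradient-estimates}.

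\textbf{Step 1: differential inequality for $w$.} Compute $(L-\partial_t)w$. The time derivative of the metric contributes $-\partial_t g(\nabla h,\nabla h)\leqslant 2k_2\|\nabla h\|^2$. The Bochner formula for $\Delta_{\overline{\varphi}}$ together with the curvature bound gives a term $\geqslant -2k_1\|\nabla h\|^2$. Following Souplet--Zhang, the Hessian terms are discarded after producing the good drift term $2(1-a)\frac{p(1-\ldots)}{q-ph}\langle\nabla h,\nabla w\rangle$. This should leave the key positive term $c_0(q-ph)\,w^2$, with $c_0>0$ depending on $p$ and $\delta$. The lower-order terms are:
\begin{itemize}
\item $\langle\nabla b,\nabla h\rangle$, weighted by $(q-ph)^{-2}$;
\item $b$-terms coming from the $(q-ph)^{-1}$ factor;
\item the reaction term.
\end{itemize}
Differentiating $ce^{(\alpha-1)h}$ produces $(\alpha-1)c\,u^{\alpha-1}\|\nabla h\|^2$, and differentiating the denominator produces $\frac{p}{q-ph}c\,u^{\alpha-1}\|\nabla h\|^2$. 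Together these give exactly $\big(\alpha-1+\frac{p}{q-\ln u^p}\big)c\,u^{\alpha-1}w$, which explains the shape of the last term in the statement. We keep only its positive part.

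\textbf{Step 2: cutoff.} Take a standard cutoff $\psi(r(x,t),t)$ supported in $\mathcal{Q}_{R,T}$, equal to $1$ on $\mathcal{Q}_{R/2,T}$, with the usual bounds
\begin{equation*}
|\nabla\psi|^2/\psi\leqslant C/R^2,\qquad |\partial_t\psi|\leqslant C\psi^{1/2}/(t-t_0+T).
\end{equation*}
The time derivative of $r$ is controlled using $\partial_t g\geqslant -2k_2 g$; this yields terms of order $k_2$. The drifted Laplacian of $r$ is estimated via the generalized Laplacian comparison: for $r\geqslant1$, $\Delta_{\overline{\varphi}}r\leqslant \Gamma_{\overline{\varphi}}+C k_1 R$, obtained by integrating the Riccati inequality along geodesics from $r=1$ under $\textup{Ric}_{\overline{\varphi}}\geqslant -k_1/(1-a)$. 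Hence $-L\psi\leqslant C\big(\frac{(\Gamma_{\overline{\varphi}})^+}{R}+k_1+\frac1{R^2}\big)$. Where $x$ is in the cut locus, we use Calabi's trick.

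\textbf{Step 3: maximum principle.} At a maximum point of $\psi w$ with $t>t_0-T$, we have $\nabla(\psi w)=0$ and $(L-\partial_t)(\psi w)\leqslant0$. Multiply by $\psi$ and use $q-ph\geqslant\delta$. Each error term is absorbed into $\tfrac{c_0}{2}\psi w^2$ by Young's inequality. In particular, the term $\|\nabla b\|\,w^{1/2}$ is handled by Young with exponents $(4/3,4)$, leading to $\|\nabla b\|^{2/3}$, i.e. the power $\frac13$ in the gradient estimate. This yields $\psi w\leqslant C(\cdots)^2$, and evaluating on $\mathcal{Q}_{R/2,T}$ gives the claim.

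The main obstacle is Step 1. The two Laplacian-type operators must combine cleanly into $(1-a)\Delta_{\overline{\varphi}}$, which requires $1>a$ for parabolicity. The harder part is obtaining the coercive $w^2$ term with a constant depending only on $\delta,p$ while keeping track of the reaction term's sign. If the naive choice of $w$ fails to close, I would first try replacing $w$ by $\|\nabla h\|^2/(q-ph)^2$ with a modified exponent, as in the elliptic analogue in~\cite{gomes2025gradient}.
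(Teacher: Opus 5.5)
Your proposal follows essentially the same route as the paper. Your $w=p^{2}\|\nabla \ln u\|^{2}/(q-p\ln u)^{2}$ is exactly the paper's $G=\|\nabla h\|^{2}/(q-h)^{2}$ with $h=p\ln u$, and the paper uses the same ingredients: the Bochner formula for $\Delta_{\overline{\varphi}}$ (discarding the completed square $\|\nabla^{2}h/(q-h)+dh\otimes dh/(q-h)^{2}\|^{2}$), the Bailesteanu--Cao--Pulemotov cutoff, the Wei--Wylie comparison, Calabi's trick, and Young's inequality.

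There are two details to get right when you write it out.
\begin{enumerate}[(i)]
\item The cutoff must be the $\tau$-dependent one. It should vanish at $t=t_{0}-T$ and equal $1$ only on $[0,R/2]\times[\tau,t_{0}]$, with $|\partial_t\bar\phi|\leqslant C\bar\phi^{1/2}/(\tau-t_{0}+T)$; the estimate is then read off at time $\tau$. As you stated it, your cutoff equals $1$ on all of $\mathcal{Q}_{R/2,T}$, so nothing prevents the maximum from sitting at $t=t_{0}-T$.
\item In Step~3, keep the factor $(q-ph)$ on the coercive term (the paper's $\frac{(1-a)(q-h)}{p}\phi G^{2}$) and divide by it only at the end. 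The drift/cutoff cross term produces $|p-q+ph|^{4}/\big((q-ph)^{3}R^{4}\big)$, which is bounded only after that division, via $|p-q+ph|/(q-ph)\leqslant p/\delta+1$. If you first replace $(q-ph)$ by $\delta$, this term leaves an uncontrolled $|p-q+ph|^{4}/R^{4}$.
\end{enumerate}
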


\begin{remark}
Theorem~\ref{main} extends the method of~\cite{li1986parabolic,souplet2006sharp, wu2019gradient} to the Bakry-Émery Ricci tensor and the associated drifted Laplacian. For instance, taking $\varphi$ to be constant, $p=1$, $q=1+\ln(D)$, $a=0$, $\alpha=1$, $b\equiv0$ and $c=0$, we recover Souplet-Zhang’s gradient estimate for the heat equation in P.~Souplet and Q.~Zhang~\cite[Theorem~1.1]{souplet2006sharp}. The development of this approach goes back to the seminal work of P. Li and S. Yau~\cite{li1986parabolic}, who established a gradient estimate via the maximum principle on Riemannian manifolds with Ricci curvature bounded from below. J.-Y. Wu~\cite[Theorem~1.1]{wu2019gradient} provides a similar result with $p=1$, $q=1+\ln(D)$, $a=0$, $\delta=1$ and $\alpha\in\mathbb{R}$. The novelty of Theorem~\ref{main} lies in the fact that a suitable choice of the parameters $p$ and $q$ allows us to treat the regime $\alpha\in\mathbb{R}$ while keeping $c$ constant in the Liouville-type arguments developed below. This flexibility is essential in our applications because, depending on the parameter $\rho$ in the Ricci-Bourguignon flow, the nonlinear exponent $\alpha$ can assume any real value.
\end{remark}

We apply Theorem~\ref{main} for studying warped product solutions to the Ricci-Bourguignon flow in long-time regimes. A solution $\bar g(t)$ of~\eqref{Eq-RBFD} is called \emph{ancient} if it is defined on $(-\infty,T)$ for some $T\in\mathbb{R}$, \emph{immortal} if it is defined on $(T,\infty)$ for some $T\in\mathbb{R}$, and \emph{eternal} if it is defined for all real times. 

The first application of Theorem~\ref{main} shows that, under the curvature bounds and the stated monotonicity and growth assumptions, the evolution remains within the standard product geometries. Precisely, the solution is a usual Riemannian product for the entire time interval on which it is defined, so the only long-time models allowed in this regime are product manifold geometries (e.g., cylindrical models).

\begin{theorem}\label{theorem**}
Let $\big(B^{n},g(t)\big)$, $t\in I$, be a family of complete Riemannian metrics on a noncompact manifold $B^n$ with
\begin{equation*}
(1-2m\rho)\textup{Ric}_{g(t)}+\nabla^{2}_{g(t)}\varphi \geqslant 0,\quad \|\nabla \varphi\|_{g(t)}\leqslant C_{0},
\quad 
\frac{\partial}{\partial  t} g(t)\geqslant0,\quad \mbox{on} \quad B\times I
\end{equation*}
where $1-2m\rho>0$, $C_{0}>0$ and $S_{g(t)}(x)=\beta(t)$ for some $\beta\in C^{\infty}(I)$.
Let $(F^{m},g_{F})$ be an Einstein manifold with scalar curvature $S_{F}$ and assume that the warped metric
\begin{equation*}
\overline g(t)=g(t)+f(t)^{2}g_{F}
\end{equation*}
solves the Ricci-Bourguignon flow~\eqref{Eq-RBFD} and either 
$$S_{F}=0, \quad \rho S_{g(t)}=0, \quad \mbox{or}\quad S_{F}<0,\quad \rho S_{g(t)} \ \ \mbox{is a negative constant}.$$
Then the following assertions hold
\begin{enumerate}[(a)]
\item [\textup{(a)}] If $\overline{g}(t)$ defines an immortal solution with $I=(T,\infty)$ and 
\begin{equation*}
\ln f(x,t)=o\left(r(x,t)^{\frac{1}{2}}+(t-T)^{\frac{1}{2}}\right)
\quad
\mbox{near infinity},
\end{equation*}
where $r(x,t)$ denotes the distance from a fixed point $x_{0}\in B^n$ with respect to $g(t)$, then,
$$
\lim_{t\to\infty}\|\nabla \ln f(x_{0},t)\|_{g(t)}=0.
$$
\item [\textup{(b)}] If $\overline{g}(t)$ defines an ancient or eternal solution and 
\begin{equation*}
\ln f(x,t)=o\left(r(x,t)^{\frac{1}{2}}+|t|^{\frac{1}{2}}\right)
\quad
\mbox{near infinity},
\end{equation*}
then $f$ is constant and $\overline{g}(t)$ must be a Riemannian product metric.
\end{enumerate}
\end{theorem}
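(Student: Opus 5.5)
We apply Theorem~\ref{main} to $u=f^{1/\sigma}$ on the base $(B^n,g(t))$. By Theorem~\ref{thm1}, $u$ solves \eqref{PDE-parabolic}. This is \eqref{Eq-Parabolic*} with
\[
a=2m\rho,\qquad b=\frac{\rho}{\sigma}S_{g(t)},\qquad c=\frac{m\rho-1}{m\sigma}S_F,\qquad \alpha=1-2\sigma .
\]
The curvature hypothesis gives $(1-a)\textup{Ric}+\nabla^2\varphi\geqslant 0$, so $k_1=0$. Monotonicity $\partial_t g\geqslant 0$ gives $k_2=0$. Since $S_{g(t)}=\beta(t)$ is constant in space, $\nabla b=0$.

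In the case $\rho S_{g(t)}=0$ we get $b=0$. In the case where $\rho S_{g(t)}$ is a negative constant, $b=\rho\beta/\sigma$ is constant. I would first check the sign of $\sigma$ when $1-2m\rho>0$, so that $b^+=0$. If that fails, I would instead absorb $b$ by the substitution $u\mapsto e^{-\int b}u$, which modifies $c$ by a time factor, and track it.

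For the nonlinear term we need
\[
\Big[\big(\alpha-1+\tfrac{p}{q-\ln u^p}\big)c\Big]^+=0 .
\]
When $S_F=0$ this is automatic since $c=0$. When $S_F<0$, we use the freedom in $p,q$ and the sign of $c$, as in the remark after Theorem~\ref{main}. Because $\alpha-1=-2\sigma$, we have $(\alpha-1)c=-2\,\frac{m\rho-1}{m}S_F\cdot$(sign). We choose $q$ large relative to $p$ so that $\frac{p}{q-\ln u^p}$ is small and the bracket has the sign of $(\alpha-1)c\leqslant 0$.

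Next, the $\Gamma$ term. We have $\Delta_{\overline\varphi}r=\Delta r-\langle\nabla\overline\varphi,\nabla r\rangle$. The bound $\|\nabla\varphi\|\leqslant C_0$, together with the Wei--Wylie type comparison under $\textup{Ric}_{\overline\varphi}\geqslant 0$, gives $\Delta_{\overline\varphi}r\leqslant C$ on $\{r=1\}$. So $\Gamma^+$ is a constant and its term is $O(R^{-1/2})$.

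The main obstacle is that $D$, and hence $\ln u$, is not a priori bounded. We therefore apply the theorem with local bounds. Let $M_{R,T}=\sup_{\mathcal Q_{R,T}}|\ln u|$, which by the growth hypothesis is $o(R^{1/2}+T^{1/2})$. We use $p=1$ and $q=1+\ln D$ with $D=e^{M}$ on the cylinder, or symmetric choices to handle $u$ bounded below, so that $q-\ln u\leqslant 1+2M$. This yields, on $\mathcal Q_{R/2,T}$,
\[
\|\nabla\ln u\|\leqslant C\,(1+2M_{R,T})\Big(\frac1R+\frac1{T^{1/2}}+\frac{C'}{R^{1/2}}\Big).
\]
For (b), fix $(x,t)$, take the cylinder with $t_0=t$ and $R=T\to\infty$. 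The right side is $o(R^{1/2})\cdot O(R^{-1/2})\to 0$, so $\nabla u=0$ everywhere. Then $f=f(t)$, and feeding this back into the flow equations of Theorem~\ref{thm1} shows $f$ is constant in time as well; after rescaling the fiber, $\overline g(t)$ is a product.

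For (a), take $t_0=t$, $T=t-T_0\sim t$ and $R=\sqrt t$, at the point $x_0$. The same bound tends to $0$ as $t\to\infty$. Since $\nabla\ln f=\sigma\nabla\ln u$, we conclude $\|\nabla\ln f(x_0,t)\|_{g(t)}\to 0$.

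The delicate point throughout is to verify that all constants, and in particular the nonlinear $c$-term when $S_F<0$, stay nonpositive uniformly in $R$.
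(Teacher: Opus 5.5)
Your overall route is the paper's: the same $a,b,c,\alpha$, $k_1=k_2=0$, $\nabla b=0$, $\Gamma_{\overline{\varphi}}$ bounded by the Wei--Wylie comparison using $\|\nabla\overline{\varphi}\|\leqslant C_0/(1-a)$, then $R\to\infty$. However, two steps fail as written.

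The first is the scaling in (a). With $R=\sqrt{t}$, the term from $\Gamma_{\overline{\varphi}}$ is $C'R^{-1/2}=C't^{-1/4}$. Your prefactor satisfies $1+2M_{R,T}\geqslant|\ln u(x_0,t)|$, and the hypothesis gives only $|\ln u(x_0,t)|=o\big((t-T_0)^{1/2}\big)$. So the right-hand side is only known to be $o(t^{1/2})\cdot t^{-1/4}=o(t^{1/4})$, which need not tend to zero. Whatever cylinder you use, the growth hypothesis bounds the prefactor only by $o(t^{1/2})$, so you need $R$ comparable to $t-T_0$. The paper takes $R=(t_0-T)/2$ and the cylinder $\mathcal{Q}_{R,R}\subset B\times(T,\infty)$. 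On it $\sup|\ln u|=o(R^{1/2})$, so the bound is $o(R^{1/2})\cdot O(R^{-1/2})\to0$. (Your choice $T=t-T_0$ also puts the bottom of the cylinder at the excluded time $T_0$.)

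The second gap is the last step of (b) when $S_F<0$ and $\rho S_{g(t)}=\rho S_0<0$. Once $f=f(t)$, equation \eqref{thm1-02} becomes $(f^2)'=2\rho S_0f^2+\frac{2(m\rho-1)}{m}S_F$. Its solutions are $f^2=Ce^{2\rho S_0t}+\frac{(1-m\rho)S_F}{m\rho S_0}$, and for $C>0$ this is positive and nonconstant on all of $\mathbb{R}$. So ``feeding back into the flow equations'' does not by itself give $\partial_t f=0$, and you supply no argument.

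The missing input is the temporal half of the growth hypothesis. For $C>0$ one has $\ln f=|\rho S_0|\,|t|+O(1)$ as $t\to-\infty$, contradicting $\ln f=o(|t|^{1/2})$. For $C<0$, $f^2$ becomes negative for $t\ll0$. This is exactly how the paper obtains $C=0$. Only when $S_F=0=\rho S_{g(t)}$ is $\partial_t u=0$ immediate.

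Finally, settle the signs rather than leaving them as checks. Since $\rho<\frac{1}{2m}\leqslant\frac{1}{m+1}$, you get $\sigma=\frac{1-2m\rho}{m(1-(m+1)\rho)}>0$. Hence $b\leqslant0$, $c\geqslant0$ and $(\alpha-1)c=-\frac{2(m\rho-1)}{m}S_F\leqslant0$. But your later choice $q=1+\ln D$ only gives $\frac{1}{q-\ln u}\leqslant1$, which is not $\leqslant2\sigma$ when $\sigma<\frac12$ (e.g.\ $\rho=0$, $m\geqslant3$). Take $q=\frac1\sigma+\ln D$ as the paper does. Then the bracket is $\leqslant-\sigma$ and $\delta=\frac1\sigma$ is fixed independently of $R$.
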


To quantify the global curvature scale of a maximal solution and to separate the possible long-time regimes in a scale-invariant way, we set
\begin{equation*}
\textup{K}_{\max}(t):=\sup_{x\in M}\,|\textup{Rm}(x,t)|_{\overline{g}(t)} ,\qquad t\in[0,T),
\end{equation*}
where $[0,T)$ is the maximal interval of existence, with $T\in(0,\infty]$.  
Following R. Hamilton~\cite{hamilton1995formation}, maximal solutions can be classified according to the growth of $\textup{K}_{\max}(t)$ relative to the maximal time of existence $T$. More precisely, each maximal solution belongs to exactly one of the following classes:
\begin{enumerate}[(a)]
\item \textbf{Type~I:} $T<\infty$ and $\displaystyle \sup_{t\in[0,T)} (T-t)\, \textup{K}_{\max}(t)<\infty$.
\smallskip
\item \textbf{Type~II(a):} $T<\infty$ and $\displaystyle \sup_{t\in[0,T)} (T-t)\, \textup{K}_{\max}(t)=\infty$.
\smallskip
\item \textbf{Type~II(b):} $T=\infty$ and $\displaystyle \sup_{t\geqslant 0} t\textup{K}_{\max}(t)=\infty$.
\smallskip
\item \textbf{Type~III:} $T=\infty$ and $\displaystyle \sup_{t\geqslant 0} t\textup{K}_{\max}(t)<\infty$.
\end{enumerate}

Theorem~\ref{theorem**} gives a classification result compatible with Hamilton’s trichotomy. For instance, consider the base $(\mathbb{R}, g(t))$ with $g(t)$ nondecreasing in time and a flat fiber $(F^{m},g_{F})$. Then, for any family of warping functions $f(t)$ satisfying
\begin{equation}\label{growcondition}
\ln f(x,t)=o\left(r(x,t)^{\frac{1}{2}}+|t|^{\frac{1}{2}}\right)\quad \text{near infinity},
\end{equation}
and such that
\begin{equation*}
\overline g(t)=g(t)+f(t)^{2}g_{F}
\end{equation*}
defines an eternal maximal solution to the Ricci flow on $\mathbb{R}\times F^m$, one deduces that $\bar{g}(t)$ must be a maximal solution of Type~III. Now, to obtain a Type~II(b) maximal solution, consider the base $(\mathbb{R},g(t))$ with $g(t)$ nondecreasing in time and let $(F^4,g_{F})$ be the Euclidean Schwarzschild manifold.
If $f(t)$ satisfies 
\begin{equation*}
\ln f(x,t)=o\left(r(x,t)^{\frac{1}{2}}+(t-T)^{\frac{1}{2}}\right)
\quad
\mbox{near infinity},
\end{equation*} 
and $\liminf_{t\to\infty} f(x_{0},t)<\infty$, then any immortal solution $\overline g(t)=g(t)+f(t)^{2}g_{F}$ to the Ricci-Bourguignon flow~\eqref{Eq-RBF} must be a maximal solution of Type~II(b). In fact, Theorem~\ref{theorem**} implies that
$$
\lim_{t\to\infty}\|\nabla \ln f(x_{0},t)\|_{g(t)}=0.
$$
Since $g_{F}$ is Ricci flat and nonflat, there exist $y_{0}\in F$ and a two-plane $\Pi\subset T_{y_{0}}F$ such that $K_{g_{F}}(\Pi)=\kappa>0$. Fix $A>\liminf_{t\to\infty} f(x_{0},t)$. By the definition of the lower limit, there exists a sequence
$t_{j}\to\infty$ such that $f(x_{0},t_{j})\leqslant A$. The sectional curvature formula for warped products yields
$$
K_{\bar{g}(t_j)}(\Pi)=\frac{\kappa}{f(x_0, t_j)^2}-\left\|\nabla \ln f(x_0, t_j)\right\|_{g(t_j)}^2\geqslant\frac{\kappa}{A^2}-\left\|\nabla \ln f(x_0, t_j)\right\|_{g(t_j)}^2
$$
and for all sufficiently large $j$,
$$
K_{\bar{g}\left(t_j\right)}(\Pi) \geq \frac{\kappa}{2 A^2}>0.
$$
Consequently,
$$
\left(t_j-T\right) \textup{K}_{\max }(t_j) \geqslant \frac{\kappa}{2 A^2}\left(t_j-T\right) \longrightarrow \infty .
$$
After translating the initial time $T$ to zero, it follows that $\bar{g}(t)$ is a maximal solution of Type~II(b).

Theorem~\ref{theorem**} also leads to a rigidity result for ancient solutions to the Ricci-Bourguignon flow on warped products. More precisely, if $\mathbb{R}\times_{f}F^{m}$ is an ancient solution with $\rho<\frac{1}{2m}$, a Ricci-flat fiber, and a warping function satisfying \eqref{growcondition}, then the metric must be Ricci-flat. This may be viewed as a complement to \cite[Proposition~4.4]{catino2017ricci}, where it is shown that any ancient solution to the Ricci-Bourguignon flow on a compact manifold $M^n$, with $\rho\leqslant \frac{1}{2(n-1)}$, must either have strictly positive scalar curvature or be Ricci-flat.

\begin{remark} It is worth noting that Theorem~\ref{theorem**} provides a new approach to the study of long-time solutions to the Ricci-Bourguignon flow, differing from those in the literature, where the classification is obtained from assumptions imposed directly on the initial warped product geometry. For instance, T.~Marxen~\cite{marxen2020convergence} proved that any eternal Ricci flow solution of the form $\overline{g}(t)=g(t)+f(t)^{2}g_{F}$ on $\mathbb{R}\times F^{m}$ with compact Ricci-flat fiber must be maximal of Type~III provided a set of conditions on the initial manifold is satisfied. Further examples exhibiting Type~III behavior for the Ricci flow on product manifolds were obtained by J.~Lott and N.~Sesum~\cite{lott2014ricci} in the compact setting, and by T.~Oliynyk and E.~Woolgar~\cite{oliynyk2007rotationally} in the noncompact setting. 
\end{remark}

The second application of Theorem~\ref{main} provides a global obstruction to the existence of warped product solutions. More precisely, we prove the following nonexistence theorem.

\begin{theorem}\label{theorem***}There exists no warped metric on $B^{n}\times F^{m}$ of the form
\begin{equation*}
\overline{g}(t)= g(t)+f(t)^{2}g_{F}
\end{equation*}
that solves the Ricci-Bourguignon flow \eqref{Eq-RBFD}, provided that $g(t)$, $t\in I$, is a family of complete Riemannian metrics on a noncompact smooth manifold $B^n$ satisfying
\begin{equation*}
(1-2m\rho)\textup{Ric}_{g(t)}+\nabla^{2}_{g(t)}\varphi \geqslant 0,\quad \|\nabla \varphi\|_{g(t)}\leqslant C_{0},
\quad 
\frac{\partial}{\partial  t} g(t)\geqslant0,\quad \mbox{on} \quad B\times I,
\end{equation*}
where $1-2m\rho>0$, $C_{0}>0$ and $S_{g(t)}(x)=\beta(t)$ for some $\beta\in C^{\infty}(I)$ and one of the following conditions:
\begin{enumerate}[(a)]
\item [\textup{(a)}] The metric $\overline{g}(t)$ is an ancient or eternal solution, the warping function satisfies
\begin{equation*}
\ln f(x,t)=o\left(r(x,t)^{\frac{1}{2}}+|t|^{\frac{1}{2}}\right)
\quad
\mbox{near infinity}
\end{equation*}
and either 
$$S_{F}<0, \quad \rho S_{g(t)}=0, \quad \mbox{or}\quad S_{F}=0,\quad \rho S_{g(t)} \ \ \mbox{is a negative constant}.$$
\item [\textup{(b)}] The metric $\overline{g}(t)$ is an immortal solution defined on $I=(T,\infty)$, $S_{F}=0$, $\rho S_{g(t)}=0$ and the warping function $f$ is nonconstant and satisfies
\begin{equation*}
 \ln f(x,t)=o\left(r(x,t)^{\frac{1}{2}}+(t-T)^{\frac{1}{2}}\right)
\quad
\mbox{near infinity}
\end{equation*}
and
\begin{equation*}
    \frac{\partial^2}{\partial t^{2}}\left(\|\nabla \ln f(x,t)\|_{g(t)}^{2}\right)\leqslant0\ \ \mbox{on} \ \ B\times I
\end{equation*}
\end{enumerate}
hold.
\end{theorem}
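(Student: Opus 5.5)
Our plan is to argue by contradiction. We suppose such a warped solution exists and use Theorem~\ref{thm1} to reduce to the base. There $u=f^{1/\sigma}$ solves \eqref{PDE-parabolic}, which is \eqref{Eq-Parabolic*} with $a=2m\rho$, $b=\frac{\rho}{\sigma}S_{g(t)}=\frac{\rho}{\sigma}\beta(t)$ (so $\nabla b=0$), $c=\frac{m\rho-1}{m\sigma}S_F$ and $\alpha=1-2\sigma$.

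The hypotheses of Theorem~\ref{main} hold with $k_1=k_2=0$. From $(1-2m\rho)\textup{Ric}+\nabla^2\varphi\geqslant0$ and $\|\nabla\varphi\|\leqslant C_0$, the drifted Laplacian comparison gives $\Delta_{\overline\varphi}r\leqslant \frac{n-1}{r}+C$ at $r=1$. Hence $\Gamma_{\overline\varphi}^{+}$ is bounded independently of $R$, and the term $[(\Gamma_{\overline\varphi})^+]^{1/2}R^{-1/2}$ vanishes as $R\to\infty$.

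\textbf{Case (a).} We apply Theorem~\ref{main} on $\mathcal{Q}_{R,R}$ centred at an arbitrary $(x,t_0)$, with $T=R$. We choose $p$ and $q$ so that $q-\ln u^p\geqslant\delta$, and the sign conditions on $S_F$ and $\rho S_g$ are used to control the remaining terms.
\begin{itemize}
\item If $S_F<0$ and $\rho S_g=0$, then $b=0$. For $\sigma>0$ one has $c>0$ with $\alpha-1=-2\sigma<0$, so taking $p$ small makes $(\alpha-1+\frac{p}{q-\ln u^p})c\leqslant0$. The sign of $\sigma$ must be checked case by case; we may need $p$ adapted to $\sigma$.
\item If $S_F=0$ and $\rho S_g<0$ is constant, then $c=0$. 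However, $b^+$ may be a positive constant, depending on the sign of $\sigma$.
\end{itemize}
The growth hypothesis $\ln f=o(r^{1/2}+|t|^{1/2})$ turns into $q-\ln u^p=o(R^{1/2})$ on $\mathcal{Q}_{R,R}$, once $q$ is taken of size $\sup|\ln u|$ there. The prefactor times $R^{-1/2}$ then tends to $0$, and we conclude $\nabla\ln u\equiv0$. So $f=f(t)$ and the geometry is product-like.

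Returning to \eqref{PDE-parabolic} with spatially constant $u$, and using $\Delta u=0$, we get the ODE
\[
u'=\tfrac{\rho}{\sigma}\beta u+\tfrac{m\rho-1}{m\sigma}S_F u^{1-2\sigma}.
\]
Combined with the remaining warped-product equations from Theorem~\ref{thm1}, this should be incompatible. For instance, the fiber equation of the flow, $\partial_t(f^2)g_F=-2(\textup{Ric}_F-\dots)$, forces a relation between $S_F$ and $\rho\beta$ that fails under the stated sign hypotheses. This is in contrast with Theorem~\ref{theorem**}, where the mixed-sign cases are allowed.

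\textbf{Case (b).} Here $S_F=0$ and $\rho S_g=0$, so $u$ solves $\partial_tu=(1-2m\rho)\Delta_{\overline\varphi}u$, the linear heat-type case. As in Theorem~\ref{theorem**}(a), Theorem~\ref{main} on $\mathcal{Q}_{R,t-T}$ with $R\to\infty$ yields $\|\nabla\ln f(x,t)\|\leqslant o(1)$ as $t\to\infty$, uniformly in $x$ under the growth assumption. Set $h(t)=\|\nabla\ln f(x,t)\|^2\geqslant0$ for fixed $x$. This $h$ is concave by hypothesis, is nonnegative on $(T,\infty)$, and tends to $0$. A concave function tending to a finite limit at $\infty$ is nondecreasing, so $h\leqslant0$, hence $h\equiv0$. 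Therefore $f$ is spatially constant at every $x$, contradicting nonconstancy. Spatial constancy at each time is not automatically contradicted if $f$ varies in $t$. The flow with $S_F=\rho S_g=0$ forces $\partial_t f=0$ from the fiber equation, so $f$ is constant, again a contradiction.

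\textbf{Main obstacle.} We expect the hardest step to be the final incompatibility in case (a) once $f=f(t)$. This requires careful use of the explicit equations of Theorem~\ref{thm1} and of the sign conventions for $\sigma$. Choosing $p,q$ so that the $c$-term in \eqref{gradient-estimates} has the right sign for every $\rho<1/(2m)$ is also delicate.
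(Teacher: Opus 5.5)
Your reduction to $\nabla\ln u\equiv 0$ via Theorem~\ref{main} is sound. Your case (b) argument (concavity plus limit zero, then $\partial_t f=0$ from the fiber equation) is the paper's argument. The sign questions you leave open are harmless: $1-2m\rho>0$ gives $\rho<\frac{1}{2m}\leqslant\frac{1}{m+1}$, so $\sigma>0$. Hence $b=\rho\beta/\sigma\leqslant 0$, $c\geqslant 0$ and $\alpha-1=-2\sigma<0$. With $p=1$ and $q=\frac{1}{\sigma}+\ln D$ one gets $\alpha-1+\frac{1}{q-\ln u}\leqslant-\sigma$, so the $b^{+}$-term and the $c$-term both drop out.

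\textbf{The gap is the final step of case (a).} Once $f=f(t)$, you assert that the fiber equation ``forces a relation between $S_F$ and $\rho\beta$ that fails''. It does not. With $\nabla f=0$, the vertical part of the flow is exactly
\[
\frac{d}{dt}f^{2}=2\rho\beta(t)\,f^{2}+\frac{2(m\rho-1)}{m}S_F ,
\]
which is the ODE you already wrote. It has positive solutions on bounded time intervals for every sign pattern, and the horizontal equation only constrains $g(t)$. For instance, flat $\mathbb{R}^n$ times an Einstein fiber with $S_F<0$ and $f^2$ affine in $t$ is a Ricci flow ($\rho=0$, $\varphi=0$). It satisfies every hypothesis of (a) except ancientness. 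So no pointwise incompatibility exists.

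\textbf{The missing idea is to integrate the ODE backward and use that the solution exists as $t\to-\infty$.}
\begin{itemize}
\item If $S_F<0$ and $\rho\beta=0$, then $\frac{2(m\rho-1)}{m}S_F>0$. So $f^2(t)=f^2(t_0)+\frac{2(m\rho-1)}{m}S_F(t-t_0)$ becomes negative at a finite negative time, contradicting positivity.
\item If $S_F=0$ and $\rho\beta=-\kappa<0$, then $\ln f(t)=\ln f(t_0)+\kappa(t_0-t)$ grows linearly as $t\to-\infty$. This contradicts the temporal part of the growth hypothesis, $\ln f(x_0,t)=o(|t|^{1/2})$.
\end{itemize}

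Conceptually, the ODE has a positive equilibrium exactly in the same-sign cases of Theorem~\ref{theorem**}, not in mixed-sign ones as your remark suggests. In the mixed-sign cases of (a) there is none, which is why ancient solutions are excluded.
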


It is worth mentioning that in the particular case of solutions to the Ricci-Bourguignon flow~\eqref{Eq-RBF}, the growth assumption on the warping function $f$ in Theorems~\ref{theorem**} and \ref{theorem***} may be sharpened to
\begin{equation}\label{sharp-condition}
\ln f(x,t)=o\left(r(x,t)+\Theta(t)^{\frac{1}{2}}\right)\quad \mbox{near infinity},
\end{equation}
where $\Theta(t)=|t|$ for ancient and eternal solutions, whereas $\Theta(t)=t-T$ for immortal solutions defined on $(T,\infty)$. In this setting, the operator $\Delta_{\overline{\varphi}}$ reduces to $\Delta$, while the term $\Gamma_{\overline{\varphi}}$ appearing in Theorem~\ref{main} can be removed. Moreover, Example~\ref{Ex2} shows that the condition~\eqref{sharp-condition} is sharp with respect to the spatial factor. 

\section{Necessary and sufficient conditions for constructing warped metric solutions to the Ricci-Bourguignon flow} \label{NSCWMSRBF}

In this section, we establish necessary and sufficient conditions for the construction of warped metric solutions to the Ricci-Bourguignon flow~\eqref{Eq-RBFD}. In the Ricci flow setting, H. Tran~\cite[Lemma~2.1]{tran2016harnack} showed that a warped product structure induces an evolution system on the base manifold. In Theorem~\ref{thm1}, we prove that an analogous result holds for the Ricci-Bourguignon flow and further show that the warped structure forces the fiber to be Einstein. The key step is to prove that the DeTurck term $\varphi(\cdot,t)$ depends only on the base. The proof relies on an auxiliary proposition and a lemma.

\begin{proposition}\label{Proposition-1} Let $(B^{n},g(t))$ be a smooth one-parameter family of complete Riemannian manifolds, $(F^{m},g_{F})$ a complete Riemannian manifold, and $f(t)$ a smooth family of positive nonconstant functions on $B^n$. If the warped metric
\[
\overline{g}(t)=g(t)+f(t)^2g_{F}
\]
is a solution of the Ricci-Bourguignon flow
\begin{equation}\label{RBF-2}
\frac{\partial}{\partial t}\overline{g}(t)
= -2\textup{Ric}_{\overline{g}(t)} - 2\nabla^{2}_{\overline{g}(t)}\varphi
+ 2\rho S_{\overline{g}(t)}\overline{g}(t),
\end{equation}
then $\varphi(\cdot,t)$ depends only on the base $B^n$ and the scalar curvature $S_F$ of the fiber is constant.
\end{proposition}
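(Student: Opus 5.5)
The plan is to decompose the flow equation \eqref{RBF-2} into its components with respect to the product splitting $T(B\times F)=TB\oplus TF$. We use the standard warped product formulas for $\overline{g}=g+f^2g_F$, with $X,Y$ horizontal and $V,W$ vertical:
\[
\textup{Ric}_{\overline g}(X,Y)=\textup{Ric}_g(X,Y)-\tfrac{m}{f}\nabla^2 f(X,Y),\qquad \textup{Ric}_{\overline g}(X,V)=0,
\]
\[
\textup{Ric}_{\overline g}(V,W)=\textup{Ric}_F(V,W)-\Big(\tfrac{\Delta f}{f}+(m-1)\tfrac{\|\nabla f\|^2}{f^2}\Big)\overline g(V,W).
\]
We also need $S_{\overline g}=S_g+S_F f^{-2}-2m\frac{\Delta f}{f}-m(m-1)\frac{\|\nabla f\|^2}{f^2}$. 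For the Hessian of $\varphi$ we use $\overline\nabla_XV=\overline\nabla_VX=\frac{X(f)}{f}V$ and $\overline\nabla_VW=\nabla^F_VW-\overline g(V,W)\frac{\nabla f}{f}$.

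\textbf{Step 1 (mixed components).} The left side of \eqref{RBF-2} is $\partial_tg+2f\partial_tf\,g_F$, so it has no mixed $(X,V)$ part. The Ricci term and $\rho S\overline g$ also have no mixed part. Therefore $\nabla^2_{\overline g}\varphi(X,V)=0$ for all horizontal $X$ and vertical $V$. Computing this, we get
\[
XV\varphi-\tfrac{X(f)}{f}V\varphi=0,\quad\text{that is,}\quad X\big(f^{-1}V\varphi\big)=0 .
\]
Hence $V\varphi=f(x,t)\,h_V(y,t)$ for some function $h$ defined on the fiber, and so $\varphi(x,y,t)=\varphi_1(x,t)+f(x,t)\psi(y,t)$.

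\textbf{Step 2 (horizontal components).} The $(X,Y)$ components give
\[
\partial_tg=-2\textup{Ric}_g+\tfrac{2m}{f}\nabla^2f-2\nabla^2\varphi_1-2\psi\nabla^2 f+2\rho S_{\overline g}\,g .
\]
Every term here except $\psi\nabla^2 f$ and $\rho S_{\overline g}$ is independent of $y$. The quantity $S_{\overline g}$ depends on $y$ only through $S_F(y)$.

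\textbf{Step 3 (vertical components).} The $(V,W)$ components give
\[
2f\partial_tf\,g_F=-2\textup{Ric}_F+2\Big(\tfrac{\Delta f}{f}+(m-1)\tfrac{\|\nabla f\|^2}{f^2}\Big)f^2g_F-2\big(\nabla^2_F(f\psi)-f\langle\nabla f,\nabla\varphi_1+\psi\nabla f\rangle g_F\big)+2\rho S_{\overline g}f^2g_F .
\]

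\textbf{Step 4 (conclusion).} The key point, and the main obstacle, is to use the nonconstancy of $f$ to force $\psi$ to be constant in $y$.

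First, take the trace of the vertical equation with respect to $g_F$ and differentiate in a base direction to isolate $y$-dependence. Alternatively, differentiate the horizontal equation along a vertical direction $V$, which gives
\[
-2(V\psi)\nabla^2 f+2\rho f^{-2}(VS_F)\,g=0 .
\]
Taking the trace-free part: if $\nabla^2f$ is not pure trace, we immediately get $V\psi=0$. Otherwise, tracing gives $(V\psi)\Delta f=n\rho f^{-2}VS_F$.

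Next, apply $V$ to the traced vertical equation. This gives a second linear relation among $V\psi$, $V(\Delta_F\psi)$ and $VS_F$, whose coefficients are functions of $x$: powers of $f$, $\|\nabla f\|^2$, $\Delta f$, and so on. Since $f$ is nonconstant, these coefficients vary in $x$ independently of $y$. Freezing $y$ and comparing two base points then forces $V\psi=0$ and $VS_F=0$.

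This comparison is where care is needed. One must handle the degenerate situations: $\rho=0$, the open set where $\nabla f=0$, and $\nabla^2f$ being a multiple of $g$. The argument is to work on the open dense set where $\nabla f\neq0$, use analyticity-free arguments by choosing two points with different values of $f$, and extend by continuity.

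Once these relations hold, $\psi=\psi(t)$. Hence $\varphi=\varphi_1+f\psi(t)$ depends only on the base, and $S_F$ is constant.
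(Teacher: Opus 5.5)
Your Steps 1--3 agree with the paper's decomposition: $\varphi=\eta+f\omega$ with $\omega=\psi$, and $(V\psi)\nabla^2 f=\rho f^{-2}(VS_F)\,g$. One sign needs correcting: by your own formula for $\overline\nabla_VW$, the vertical Hessian is $\nabla^2_{\overline g}\varphi(V,W)=f\nabla^2_F\psi(V,W)+f\langle\nabla f,\nabla\varphi\rangle g_F(V,W)$, with a plus. The real problem is Step 4: the two scalar relations you keep do not force $V\psi=VS_F=0$.

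Fix $y$ and set $A=V\psi$, $B=VS_F$, $C=V(\Delta_F\psi)$. Your relations are $A\nabla^2 f=\rho Bf^{-2}g$ and $(1-m\rho)B+fC+mf\|\nabla f\|^2A=0$ on $B^n$. If $A\neq0$, the first gives $\nabla^2 f=kf^{-2}g$ with $k=\rho B/A$, hence $\|\nabla f\|^2=c-2k/f$ for a constant $c$. The second then collapses to $C=-mcA$ and $(1-3m\rho)B=0$. Two cases escape your argument.

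(i) If $k=0$, i.e. $\nabla^2 f\equiv0$, nothing forces $A=0$. This case has to be excluded using completeness of the base and $f>0$: a nonconstant function with zero Hessian on a complete manifold is unbounded below. Your argument never uses completeness, and it is indispensable. Over the incomplete base $\{x_1>0\}\subset\mathbb{R}^n$ with $f=x_1$ and $F=S^m$, $\overline g$ is flat. Taking $\omega$ a first eigenfunction, $\varphi=x_1\omega(y)$ is a linear function with zero Hessian. So $\overline g$ is a static solution of the flow with $\varphi$ depending on the fiber.

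(ii) If $m\geqslant2$, $\rho=1/(3m)$ and $k\neq0$, every $A\neq0$ is compatible with both relations at every base point. Such $f$ exist on complete bases, e.g. $B=\mathbb{R}$ with $f''=k/f^2$, $k>0$, $f'(0)=0$, so $f\geqslant f(0)>0$. The linear system in $(A,B,C)$ collected over all base points has rank two, so comparing base points cannot conclude. For $\rho\neq1/(3m)$, your two relations plus completeness do suffice.

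What is missing is information from the vertical equation beyond its trace. The paper uses that $Q|_{\mathcal V}=-f\,\partial_tf\,g_F$ has a coefficient depending only on the base, so also $\operatorname{div}_{g_F}(Q|_{\mathcal V})=0$ (condition (c) of Lemma~\ref{New-lem-1}). It then applies the contracted Bianchi identity on $F$ and $\operatorname{div}\nabla^2_F\psi=d\Delta_F\psi+\textup{Ric}_F(\nabla\psi,\cdot)$. After eliminating $\|\nabla f\|^2V\psi$ through the fiber-only quantity $C_V=\tfrac12\|\nabla f\|^2V\psi+\rho f^{-1}VS_F$, this yields $(\tfrac12-3\rho)VS_F+f\,(\text{fiber-only})=0$. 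Its critical value $\rho=1/6$ differs from the trace's $\rho=1/(3m)$ unless $m=2$. Alternatively, for $m\geqslant3$ the trace-free part of the vertical equation shows, since $f$ is nonconstant, that $\textup{Ric}_F$ is pure trace, and Schur's lemma gives $S_F$ constant. The doubly degenerate case $m=2$, $\rho=1/6$ still needs a separate argument, as in the paper's Case~3. There the divergence and trace relations are combined with $\textup{Ric}_F=\tfrac{S_F}{2}g_F$, and the argument splits according to whether $\lambda=\tfrac6nf^2\Delta f$ is nonconstant, zero, or a nonzero constant.
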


To prove Proposition~\ref{Proposition-1}, we begin by establishing a general lemma for time-independent warped product Riemannian manifolds.

\begin{lemma}\label{New-lem-1}
Let \((B^{n},g)\) and \((F^{m},g_{F})\) be complete Riemannian manifolds, and \(f\) a smooth positive nonconstant function on \(B^n\). 
Fix \(\rho\in\mathbb{R}\) and  \(\varphi\in C^{\infty}(B^n\times F^m)\). 
For the warped metric
\[
\overline{g}=g+f^2g_{F}, 
\]
put 
\begin{equation}\label{tens-Q}
Q:=\textup{Ric}_{\overline{g}} + \nabla^{2}_{\overline{g}}\varphi
-\rho S_{\overline{g}}\overline{g}. 
\end{equation}
Assume that the following conditions hold:
\begin{enumerate}[(a)]
\item [\textup{(a)}] $Q(X,V)=0$ for $X\in \mathcal{L}(B)$ and $V\in \mathcal{L}(F)$.  
\smallskip
\item [\textup{(b)}] The horizontal component of $Q$, denoted by $Q|_{\mathcal{H}}$, does not depend on the fiber $F^m$. Namely, $V(Q(X,Y))=0$ for all $X,Y\in \mathcal{L}(B)$ and $V\in \mathcal{L}(F)$. 
\smallskip
\item [\textup{(c)}] $\textup{div}_{g_{F}} (Q|_{\mathcal{V}})\!=\!0$, where $Q|_{\mathcal{V}}$ is the vertical component of $Q$ defined by $Q|_{\mathcal{V}}(U,V)=Q(U,V)$ for $U,V\in \mathcal{L}(F)$. 
\smallskip
\item [\textup{(d)}] $\textup{trace}_{g_{F}} (Q|_{\mathcal{V}})$ does not depend on the fiber $F^m$. Namely, $V(\textup{trace}_{g_{F}} (Q|_{\mathcal{V}}))=0$ for all $V\in\mathcal{L}(F)$. 
\end{enumerate}
Then, $\varphi$ depends only on the base $B^n$, and the scalar curvature $S_F$ of the fiber is constant.
\end{lemma}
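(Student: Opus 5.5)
We will compute $Q$ from the standard O'Neill formulas for the warped product $\overline g=g+f^2g_F$. For $X,Y\in\mathcal L(B)$ and $U,V\in\mathcal L(F)$ we have
\[
\textup{Ric}_{\overline g}(X,Y)=\textup{Ric}_g(X,Y)-\tfrac{m}{f}\nabla^2 f(X,Y),\qquad \textup{Ric}_{\overline g}(X,V)=0,
\]
\[
\textup{Ric}_{\overline g}(U,V)=\textup{Ric}_{g_F}(U,V)-\big(f\Delta f+(m-1)\|\nabla f\|^2\big)g_F(U,V).
\]
For the Hessian of $\varphi$ we use $\overline\nabla_XV=\overline\nabla_VX=\tfrac{X(f)}{f}V$, $\overline\nabla_XY=\nabla_XY$ and $\overline\nabla_UV=\nabla^F_UV-f\,g_F(U,V)\nabla f$. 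These give
\[
\nabla^2_{\overline g}\varphi(X,Y)=\nabla^2_g\varphi(X,Y),\qquad \nabla^2_{\overline g}\varphi(X,V)=XV(\varphi)-\tfrac{X(f)}{f}V(\varphi),
\]
\[
\nabla^2_{\overline g}\varphi(U,V)=\nabla^2_{g_F}\varphi(U,V)+f\,g_F(U,V)\,g(\nabla f,\nabla\varphi),
\]
where $\nabla^2_g\varphi$ is the base Hessian at fixed fiber point and $\nabla^2_{g_F}\varphi$ the fiber Hessian at fixed base point. Also $S_{\overline g}=S_g-2m\tfrac{\Delta f}{f}-m(m-1)\tfrac{\|\nabla f\|^2}{f^2}+\tfrac{S_F}{f^2}$, where $S_F$ may a priori be a nonconstant function on $F$.

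\emph{Step 1 (condition (a)).} The mixed term gives $XV(\varphi)=\tfrac{X(f)}{f}V(\varphi)$, that is, $X\big(V(\varphi)/f\big)=0$. Hence $V(\varphi)=f\,h_V$ with $h_V$ a function on $F$ only. Integrating, we get the decomposition $\varphi(x,y)=\alpha(x)+f(x)\psi(y)$ for some $\alpha\in C^\infty(B)$ and $\psi\in C^\infty(F)$.

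\emph{Step 2 (condition (b)).} Substituting this decomposition into the horizontal part,
\[
Q(X,Y)=\textup{Ric}_g-\tfrac mf\nabla^2f+\nabla^2\alpha+\psi\,\nabla^2 f-\rho S_{\overline g}\,g .
\]
The only fiber-dependent quantities here are $\psi\nabla^2 f$ and $\rho S_F f^{-2}g$. Applying $V$ gives
\[
V(\psi)\,\nabla^2 f-\rho f^{-2}V(S_F)\,g=0 .
\]
If $\nabla^2 f$ is not proportional to $g$ at some point, this forces $V(\psi)=0$ and $\rho V(S_F)=0$ there, and by analyticity-free arguments on open sets we can propagate this. Otherwise $\nabla^2 f=\lambda g$ and we only get the relation $V(\psi)\lambda f^2=\rho V(S_F)$. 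We keep this relation for later.

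\emph{Steps 3--4 (conditions (c) and (d)).} The vertical part is
\[
Q|_{\mathcal V}=\textup{Ric}_{g_F}+f\nabla^2_{g_F}\psi+\Phi\, g_F,
\]
where $\Phi=-\big(f\Delta f+(m-1)\|\nabla f\|^2\big)+f^2\psi\,g(\nabla f,\nabla f)/f+f g(\nabla f,\nabla\alpha)-\rho f^2 S_{\overline g}$. Here $\Phi$ depends on $y$ only through $\psi$ and $S_F$.

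Condition (d) says the $g_F$-trace is independent of $y$:
\[
S_F+f\Delta_F\psi+m\big(\|\nabla f\|^2\psi+\dots\big)-m\rho S_F=c(x).
\]
We vary $x$. Since $f$ is nonconstant, $f$ and $\|\nabla f\|^2$ take distinct values, so we can separate coefficients. Collecting the $y$-dependent terms as a polynomial identity in the independent base quantities $(f,\|\nabla f\|^2,\dots)$, we conclude $\Delta_F\psi=$ const, that $\psi$ times the nonconstant base quantity is fiber-independent, and that $(1-m\rho)S_F$ is constant.

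Condition (c) is used via the contracted Bianchi identity $\textup{div}\,\textup{Ric}_{g_F}=\tfrac12 dS_F$ and the identity $\textup{div}\nabla^2\psi=d\Delta_F\psi+\textup{Ric}_{g_F}(\nabla\psi)$. This yields
\[
\tfrac12 dS_F+f\big(d\Delta_F\psi+\textup{Ric}_F(\nabla\psi)\big)+d\Phi=0 .
\]
Again separating the powers of $f$ and the other base functions forces $d\psi=0$, i.e. $\psi$ is constant. Therefore $\varphi=\alpha+\text{const}\cdot f$ depends only on $B$. Then $dS_F=0$ follows, which also covers the degenerate value $m\rho=1$, where (d) alone is insufficient.

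\emph{Main obstacle.} The delicate step is the coefficient separation. We must show that the base functions multiplying $\psi$, $\Delta_F\psi$ and $S_F$ (namely $1$, $f$, $f^2$, $f\,g(\nabla f,\cdot)$, etc.) are sufficiently independent, using only that $f$ is nonconstant. We would handle this by evaluating the identities at two base points where $f$ differs, subtracting, and treating separately the special cases $\rho=0$ and $m\rho=1$.
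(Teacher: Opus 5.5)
\textbf{Gap: the coefficient separation in Steps 3--4 is unjustified and, as stated, fails.} Applying $V$ to (d) and (c) gives
\begin{gather*}
(1-m\rho)V(S_F)+f\,V(\Delta_F\psi)+mf\|\nabla f\|^2V(\psi)=0,\\
\big(\tfrac12-\rho\big)V(S_F)+f\big[V(\Delta_F\psi)+\textup{Ric}_{g_F}(\nabla\psi,V)\big]+f\|\nabla f\|^2V(\psi)=0 .
\end{gather*}
To read off coefficients you need $1$, $f$, $f\|\nabla f\|^2$ to be linearly independent on $B$. Nonconstancy of $f$ does not give this. It fails exactly in the case you must exclude. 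If $V(\psi)\neq0$ at some $y_0$, your Step~2 identity forces $\nabla^2 f=\kappa f^{-2}g$ with $\kappa$ constant. Then $\|\nabla f\|^2=c_0-2\kappa/f$, so $f\|\nabla f\|^2$ is affine in $f$.

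In fact no purely local argument can work. Take the incomplete base $B=\{x_1>0\}\subset\mathbb{R}^n$, $f=x_1$, $F=S^m$ round, and $\varphi=x_1\psi$ with $\Delta_F\psi=-m\psi$. Then $\overline g$ is flat and $\varphi$ is a linear function, so $Q=0$. Hence (a)--(d) and all your derived identities hold while $\psi$ is nonconstant. Your argument uses only that $f$ is nonconstant and never the completeness of $B$. So your conclusions ``$\Delta_F\psi$ is constant'' and ``$d\psi=0$'' cannot follow from it.

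\textbf{What is missing} is the following.
\begin{enumerate}[(i)]
\item Contract your Step~2 identity with $X=\nabla f$. This gives $\tfrac12V(\psi)\,Y(\|\nabla f\|^2)=-\rho V(S_F)\,Y(1/f)$, so $C_V:=\tfrac12V(\psi)\|\nabla f\|^2+\rho V(S_F)/f$ depends only on $F$. Substitute $fV(\psi)\|\nabla f\|^2=2fC_V-2\rho V(S_F)$ into (c) and (d). They become $(\tfrac12-3\rho)V(S_F)+f\,K_V=0$ and $(1-3m\rho)V(S_F)+f\,L_V=0$, with $K_V,L_V$ depending only on the fiber. Now nonconstancy of $f$ legitimately gives $V(S_F)=0$, unless $\rho=\tfrac16$ and $m=2$.
\item Step~2 then reduces to $V(\psi)\nabla^2 f=0$. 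You must exclude $\nabla^2 f\equiv0$, and this is where completeness enters. Otherwise $\nabla f$ is parallel and nonzero, so $f$ decreases linearly along a complete geodesic, contradicting $f>0$.
\item The doubly degenerate case $\rho=\tfrac16$, $m=2$, where both coefficients vanish, needs a separate argument. The paper uses $\textup{Ric}_{g_F}=\tfrac{S_F}{2}g_F$ in dimension two and the traced relation $\lambda\nabla\psi=\nabla S_F$ with $\lambda=\tfrac{6}{n}f^2\Delta f$. It then splits according to whether $\lambda$ is nonconstant, zero, or a nonzero constant.
\end{enumerate}
The special values $\rho=0$ and $m\rho=1$ that you flag are not where the difficulty lies.
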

\begin{proof}
We begin with assumption \textup{(a)}. Let $X\in \mathcal{L}(B)$ and $V\in \mathcal{L}(F)$.  
Since $\overline{g}(X,V)=0$, the scalar curvature term in \eqref{tens-Q} vanishes. 
Moreover, for a warped product, the mixed component of the Ricci tensor vanishes as well. So,
\begin{equation*}
0 =Q(X,V)= \nabla^{2}_{\overline{g}}\varphi(X,V),
\end{equation*}
and it follows that there exist $\eta\in C^{\infty}(B)$ and $\omega\in C^{\infty}(F)$ such that 
\begin{equation*}
\varphi=\eta+f\omega.
\end{equation*}

Let $\mathcal{H}$ and $\mathcal{V}$ denote the horizontal and vertical distributions induced by the warped metric $\bar{g}$, respectively. By the Bishop-O'Neill formulas~\cite{bishop1969}, the horizontal component of the Ricci tensor is given by
\begin{equation*}
\textup{Ric}_{\bar{g}}\Big{|}_{\mathcal{H}}
= \textup{Ric}_{g}-\frac{m}{f}\nabla^{2}_{g}f. 
\end{equation*}
In addition, the scalar curvature formula for warped products yields
\begin{equation*}
\rho S_{\bar{g}}\bar{g}
= \rho\left(
S_{g}
+ \frac{S_{F}}{f^{2}}
- 2m\frac{\Delta f}{f}
- m(m-1)\frac{\|\nabla f\|^{2}}{f^{2}}
\right) \bar{g}.
\end{equation*}

Now, since $\varphi=\eta+f\omega$, for any horizontal vector fields $X,Y\in \mathcal{L}(B)$ we have
\begin{equation}\label{Eq:varphi-Base}
\nabla^{2}_{\overline{g}}\varphi(X,Y)
= \nabla^{2}_{g}\eta(X,Y)
+ \omega\nabla^{2}_{g}f(X,Y).
\end{equation}
Thus, one has
\begin{equation}\label{EQ:Geodesic-Base}
\begin{split}
Q(X,Y)& = \mathrm{Ric}_{\overline{g}}(X,Y)
+\nabla^{2}_{\overline{g}}\varphi(X,Y)
-\rho\,S_{\overline{g}}\overline{g}(X,Y)\\
&=\textup{Ric}_{g}(X,Y)-\frac{m}{f}\nabla^{2}_{g}f(X,Y) +\nabla^{2}_{g}\eta(X,Y) + \omega\nabla^{2}_{g}f(X,Y)\\
&\quad-\rho\left(S_{g}+\frac{S_{F}}{f^2}-2m\frac{\Delta f}{f}-m(m-1) \frac{\|\nabla f\|^2}{f^2}\right)g(X,Y).
\end{split}
\end{equation}
Differentiating \eqref{EQ:Geodesic-Base} along a vertical vector field $V$ and using assumption \textup{(b)}, namely, $V(Q(X,Y))=0$, we obtain
\begin{equation}\label{EQ:Hess-type-eq}
V(\omega)\nabla^{2}_{g}f(X,Y)=\rho \frac{V(S_{F})}{f^2}g(X,Y). 
\end{equation}
Replacing $X$ with $\nabla f$ in \eqref{EQ:Hess-type-eq}, we get
\[
\frac{V(\omega)}{2}Y\left(\|\nabla f\|^2\right)=V(\omega)(\nabla^{2}_{g}f)(\nabla f,Y)=\rho \frac{V(S_{F})}{f^2}Y(f)=-\rho V(S_{F})Y\left(\frac{1}{f}\right). 
\]
This implies that 
\begin{equation}\label{EQ:omega-SF-const}
C_{V}:=\frac{V(\omega)}{2}\|\nabla f\|^2+\rho V(S_{F})\frac{1}{f}
\end{equation}
depends only on the fiber $F^{m}$. 

We next consider the vertical part of $Q$. Again by the Bishop-O'Neill formulas~\cite{bishop1969}, the vertical component of the Ricci tensor is
\begin{equation*}
\textup{Ric}_{\bar{g}}\Big{|}_{\mathcal{V}}=\textup{Ric}_{g_{F}}
- \big(f\Delta f + (m-1)\|\nabla f\|^{2}\big)g_{F}.
\end{equation*}
Moreover, for any vertical vector fields $U,V\in \mathcal{L}(F)$, the Hessian of $\varphi$ with respect to the warped metric satisfies
\[
\nabla^{2}_{\overline{g}}\varphi(U,V)
= \nabla^{2}_{g_{F}}\varphi(U,V)
+ f\nabla f(\varphi)g_{F}(U,V).
\]
Using the decomposition $\varphi=\eta+f\omega$, we arrive at
\[
\nabla^{2}_{\overline{g}}\varphi(U,V)
= f\nabla^{2}_{g_{F}}\omega(U,V)
+ f\left[\nabla f(\eta)+\omega\|\nabla f\|^2\right]g_{F}(U,V).
\]
Hence,
\begin{equation}\label{eq:Q-fiber}
\begin{split}
    Q|_{\mathcal{V}}(U,V)&=\textup{Ric}_{g_{F}}(U,V)-\left(f\Delta f+(m-1)\|\nabla f\|^2\right)g_{F}(U,V)\\
    &\quad +f\nabla_{g_{F}}^2\omega(U,V)+f\left[\nabla f(\eta)+\omega\|\nabla f\|^{2}\right]g_{F}(U,V)\\
    &\quad -\rho\left(f^{2}S_{g}+S_{F}-2mf\Delta f-m(m-1)\|\nabla f\|^2\right)g_{F}(U,V). 
\end{split}
\end{equation}

Taking the $g_{F}$-divergence of \eqref{eq:Q-fiber}, evaluating it at a vertical vector field $V$, and using assumption \textup{(c)}, namely, $\textup{div}_{g_{F}} (Q|_{\mathcal{V}})=0$, we find
\begin{equation}\label{eq:Q-fib-div}
\begin{split}
0&=(\textup{div}_{g_{F}} (Q|_{\mathcal{V}}))(V)\\
&=(\textup{div}(\textup{Ric}_{g_{F}}))(V)+f(\textup{div}(\nabla_{g_{F}}^2\omega))(V)+fV(\omega)\|\nabla f\|^{2}
    -\rho V(S_{F})\\
&=\left(\frac{1}{2}-\rho\right)V(S_{F})+f\left(\textup{Ric}_{g_{F}}(\nabla\omega,V)
    +V(\Delta\omega)\right)+fV(\omega)\|\nabla f\|^{2}\\
&=\left(\frac{1}{2}-3\rho\right)V(S_{F})+f\left(2C_{V}+\textup{Ric}_{g_{F}}(\nabla\omega,V)
    +V(\Delta\omega)\right), 
\end{split}
\end{equation}
where the third equality follows from the contracted Bianchi identity and the Bochner identity, while the fourth follows from \eqref{EQ:omega-SF-const}.

On the other hand, taking the $g_{F}$-trace of \eqref{eq:Q-fiber} restricted to the vertical space, we have 
\begin{equation}\label{eq:Q-trace}
\begin{split}
\textup{trace}_{g_{F}} (Q|_{\mathcal{V}})&=S_{F}-\left(f\Delta f+(m-1)\|\nabla f\|^2\right)m\\
&\quad +f\Delta\omega+f\left[\nabla f(\eta)+\omega\|\nabla f\|^{2}\right]m\\
&\quad -\rho\left(f^{2}S_{g}+S_{F}-2mf\Delta f-m(m-1)\|\nabla f\|^2\right)m. 
\end{split}
\end{equation}
Applying a vertical vector field $V$ to \eqref{eq:Q-trace} and using assumption \textup{(d)}, namely, $V(\textup{trace}_{g_{F}} (Q|_{\mathcal{V}}))=0$, we conclude that
\begin{equation}\label{eq:0-Q-trace}
\begin{split}
0=V(\textup{trace}_{g_{F}} (Q|_{\mathcal{V}}))&=(1-\rho m)V(S_{F})+fV(\Delta\omega)+fV(\omega)\|\nabla f\|^2m\\
&=(1-3\rho m)V(S_{F})+f\left(2mC_{V}+V(\Delta\omega)\right), 
\end{split}
\end{equation}
where the second equality follows from \eqref{EQ:omega-SF-const}. 

\smallskip
\noindent
\textbf{Case 1.} Assume $\rho\neq 1/6$. 
Then, \eqref{eq:Q-fib-div} with the assumption that $f$ is nonconstant implies that $V(S_{F})=0$, which means that $S_{F}$ is constant. 
By \eqref{EQ:Hess-type-eq}, we have $V(\omega)\nabla^{2}_{g}f=0$. 
If $V(\omega)\neq 0$, it implies $\nabla^{2}_{g}f=0$. This is impossible by~\cite[Proposition 3.6]{borges2022ricci}. 
Thus, $V(\omega)=0$, which means that $\omega$ is constant. This implies that $\varphi=\eta+f\omega$ depends only on the base $B$. 

\smallskip
\noindent
\textbf{Case 2.} Assume $\rho= 1/6$ and $m\neq 2$. 
In this case, \eqref{eq:0-Q-trace} with the assumption that $f$ is nonconstant implies that $V(S_{F})=0$. 
Then, by the same argument as in Case~1, we can prove that $\varphi$ depends only on the base $B$. 

\smallskip
\noindent
\textbf{Case 3.} Assume $\rho= 1/6$ and $m=2$. 
Define a function $\lambda$ on $B$ by $\lambda:=\frac{6}{n}f^2\Delta f$. 
Then, taking the trace of \eqref{EQ:Hess-type-eq} implies 
\begin{equation}\label{eq:parallel-om-S}
\lambda\nabla\omega=\nabla S_{F}. 
\end{equation}
\noindent
\textbf{(i)}
If $\lambda$ is nonconstant, \eqref{eq:parallel-om-S} implies $\nabla\omega=0$, and hence $\nabla S_{F}=0$. 
So, the assertion follows. \\
\textbf{(ii)}
If $\lambda=0$, we have $\nabla S_{F}=0$, and hence $V(\omega)\nabla^{2}_{g}f=0$ from \eqref{EQ:Hess-type-eq}. 
Suppose that $\omega$ is nonconstant. Then, we should have $\nabla^{2}_{g}f=0$ which means that $f$ is affine on $B$. 
However, this does not happen since there is no nonconstant affine function bounded from below on a complete Riemannian manifold. 
Thus, $\omega$ should be constant, and the assertion follows. \\
\textbf{(iii)}
Suppose now that $\lambda$ is a nonzero constant. 
At first, we remark that $\textup{Ric}_{g_{F}}=(S_{F}/2)g_{F}$ on a 2-dimensional Riemannian manifold. 
Subtracting \eqref{eq:0-Q-trace} from \eqref{eq:Q-fib-div} with the fact $\textup{Ric}_{g_{F}}=(S_{F}/2)g_{F}$ and the assumption that $f$ is nonconstant implies 
\begin{equation}\label{eq:S-omega-const}
-2C_{V}+\frac{S_{F}}{2}\nabla\omega=0. 
\end{equation}
Adding \eqref{eq:S-omega-const} multiplied by $2f$ to \eqref{EQ:omega-SF-const} multiplied by $4f$ implies 
\[
\left(fS_{F}-2f\|\nabla f\|^2-\frac{2}{3}\lambda\right)\nabla\omega=0, 
\]
where we also used \eqref{eq:parallel-om-S}. 
Suppose that $\omega$ is nonconstant. 
Then, there is a point $q_{0}\in F$ and an open neighborhood $\Omega\subset F$ of $q_{0}$ such that $\nabla\omega\neq 0$ on $\Omega$. 
Then, on $B\times \Omega$, it should hold that $fS_{F}-2f\|\nabla f\|^2-2\lambda/3=0$. 
Acting the gradient with respect to $g_{F}$ implies $f\nabla S_{F}=0$. Since $f>0$, we have $\nabla S_{F}=0$ and it implies that $S_{F}$ is constant. Then, by the same argument as in Case~1, we can prove that $\varphi$ depends only on the base $B$. 
\end{proof}

Now, applying Lemma \ref{New-lem-1} to the Ricci-Bourguignon flow, we prove Proposition~\ref{Proposition-1}.

\begin{proof}[\bf Proof of Proposition~\ref{Proposition-1}]
We only need to work at an arbitrary fixed time~$t$. 
Comparing \eqref{tens-Q} and \eqref{RBF-2}, we see 
\[Q=-\frac{1}{2}\frac{\partial}{\partial t}\overline{g}(t)=-\frac{1}{2}\frac{\partial}{\partial t}g(t)-f(t)\frac{\partial f}{\partial t}(t)g_{F}.\]
It is straightforward to verify that $Q$ satisfies assumptions (a)--(d) of Lemma~\ref{New-lem-1}. The conclusion then follows.
\end{proof}

We are ready to prove the main result of this section.
\begin{theorem}\label{thm1} 
Let $(B^{n},g(t))$ be a smooth one-parameter family of complete Riemannian manifolds, $(F^{m},g_{F})$ a complete Riemannian manifold, and $f(t)$ a smooth family of positive nonconstant functions on $B^n$. We define
\begin{equation*}
\overline{g}(t)=g(t)+f(t)^2g_{F}.
\end{equation*}
Then $\overline{g}(t)$ evolves by the Ricci-Bourguignon flow \eqref{Eq-RBFD} if and only if $(F^{m},g_{F})$ is an Einstein manifold with constant scalar curvature $S_F$, and the flow reduces to the following coupled system on the base:
\begin{equation}\label{thm1-01}
\begin{split}
\frac{\partial}{\partial t}g(t)=&-2\textup{Ric}_{g(t)}+\frac{2m}{f}\nabla^{2}_{g(t)}f-2\nabla^{2}_{g(t)}\varphi\\
&+2\rho\left(
S_{g(t)}
+ \frac{S_{F}}{f^{2}}
- 2m\frac{\Delta f}{f}
- m(m-1)\frac{\|\nabla f\|^{2}}{f^{2}}
\right) g(t),
\end{split}
\end{equation}
and
\begin{equation}\label{thm1-02}
\begin{split}
\frac{\partial f}{\partial t}\!=\!(1\!-\!2m\rho)\Delta f+(1\!-\!m\rho)(m\!-\!1)\frac{\|\nabla f\|^{2}}{f}\!-\!\nabla f(\varphi)+\rho S_{g(t)}f+\frac{m\rho\!-\!1}{m}\frac{S_{F}}{f}.
\end{split}
\end{equation}
In particular, assuming $\rho\notin\{\frac{1}{m+1},\frac{1}{2m}\}$, we may rewrite \eqref{thm1-02} in the equivalent form 
\begin{equation*}
\dfrac{\partial u}{\partial t}=\Delta_{\varphi} u-2m\rho\Delta u+\frac{\rho}{\sigma} S_{g(t)}u+\dfrac{m\rho-1}{m\sigma}S_{F}u^{1-2\sigma},
\end{equation*}
where 
\begin{equation*}
u=f^{\frac{1}{\sigma}},\qquad  \sigma=\frac{1-2m\rho}{m-\rho m^{2}-m\rho },\qquad\mbox{and}\qquad \Delta_{\varphi}u:= \Delta u-\langle\nabla \varphi,\nabla u\rangle.
\end{equation*}
\end{theorem}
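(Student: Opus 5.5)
The plan is to decompose the flow equation \eqref{Eq-RBFD} into its horizontal, mixed and vertical components using the Bishop--O'Neill formulas. For the ``only if'' direction, Proposition~\ref{Proposition-1} tells us that $\varphi(\cdot,t)$ depends only on $B^n$ and that $S_F$ is constant. For the ``if'' direction, we read ``evolves by \eqref{Eq-RBFD}'' with $\varphi$ a function on the base, as in the statement. With $\varphi$ on the base, $\nabla^2_{\overline g}\varphi$ has horizontal part $\nabla^2_g\varphi$, vanishing mixed part, and vertical part $f\langle\nabla f,\nabla\varphi\rangle g_F$. We also use $\partial_t\overline g = \partial_t g + 2f\,\partial_t f\, g_F$ and the formulas for $\textup{Ric}_{\overline g}$ and $S_{\overline g}$ recalled in the proof of Lemma~\ref{New-lem-1}.

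\textbf{Mixed part.} Both sides vanish identically, so this component imposes no condition.

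\textbf{Horizontal part.} Substituting the formulas gives \eqref{thm1-01} directly.

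\textbf{Vertical part.} The equation reads
\[
2f\partial_t f\, g_F=-2\textup{Ric}_{g_F}+2\big(f\Delta f+(m-1)\|\nabla f\|^2\big)g_F-2f\nabla f(\varphi)g_F+2\rho\big(f^2S_g+S_F-2mf\Delta f-m(m-1)\|\nabla f\|^2\big)g_F.
\]
Every term except $\textup{Ric}_{g_F}$ is a multiple of $g_F$, so $\textup{Ric}_{g_F}=\mu g_F$ for some function $\mu$. Tracing gives $\mu=S_F/m$, which is constant by the proposition (or by Schur when $m\ge3$). Hence $F$ is Einstein. Dividing by $2f$ and collecting terms yields
\[
\partial_t f=(1-2m\rho)\Delta f+(m-1)(1-m\rho)\frac{\|\nabla f\|^2}{f}-\nabla f(\varphi)+\rho S_g f+\frac{m\rho-1}{m}\frac{S_F}{f},
\]
which is \eqref{thm1-02}. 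The converse follows by running the same computation backwards: if $F$ is Einstein and \eqref{thm1-01}--\eqref{thm1-02} hold, every component of \eqref{Eq-RBFD} is satisfied.

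\textbf{Change of variables.} Set $f=u^\sigma$. Then
\[
\partial_t f=\sigma u^{\sigma-1}\partial_t u,\qquad \Delta f=\sigma u^{\sigma-1}\Delta u+\sigma(\sigma-1)u^{\sigma-2}\|\nabla u\|^2,\qquad \frac{\|\nabla f\|^2}{f}=\sigma^2u^{\sigma-2}\|\nabla u\|^2.
\]
The coefficient of $u^{\sigma-2}\|\nabla u\|^2$ is
\[
(1-2m\rho)\sigma(\sigma-1)+(m-1)(1-m\rho)\sigma^2 .
\]
It vanishes exactly when $\sigma\big[(1-2m\rho)+(m-1)(1-m\rho)\big]=1-2m\rho$. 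Since $(1-2m\rho)+(m-1)(1-m\rho)=m-m^2\rho-m\rho$, this gives the stated value of $\sigma$. The hypothesis $\rho\neq\frac1{m+1}$ keeps the denominator nonzero, and $\rho\ne\frac1{2m}$ ensures $\sigma\neq0$ so that $u=f^{1/\sigma}$ is defined.

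Dividing by $\sigma u^{\sigma-1}$ then gives
\[
\partial_t u=(1-2m\rho)\Delta u-\langle\nabla\varphi,\nabla u\rangle+\frac{\rho}{\sigma}S_g u+\frac{m\rho-1}{m\sigma}S_F u^{1-2\sigma}.
\]
This is the claimed equation, since $\Delta_\varphi u-2m\rho\Delta u=(1-2m\rho)\Delta u-\langle\nabla\varphi,\nabla u\rangle$.

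The main obstacle is the reduction of $\varphi$ to the base, but that is supplied by Proposition~\ref{Proposition-1}. The remaining risk is sign and coefficient bookkeeping in the vertical component, so I would recheck that step carefully.
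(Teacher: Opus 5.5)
Your proposal is correct and follows essentially the same route as the paper: use Proposition~\ref{Proposition-1} to put $\varphi$ on the base, then split \eqref{Eq-RBFD} via the Bishop--O'Neill formulas. The vertical component forces $\textup{Ric}_{g_F}=\mu g_F$ with $\mu=S_F/m$, which yields \eqref{thm1-02}; the horizontal component gives \eqref{thm1-01}; and the converse follows by substitution. You also verify the substitution $f=u^{\sigma}$ explicitly, showing that the gradient term cancels exactly for the stated $\sigma$ and where the exclusions $\rho\neq\frac{1}{m+1},\frac{1}{2m}$ enter, which the paper only asserts.
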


\begin{proof} Differentiating the warped metric $\overline{g}(t)=g(t)+f(t)^{2}g_{F}$
with respect to $t$ yields
\begin{equation*}
    \frac{\partial}{\partial t}\bar{g}(t)\ \Big{|}_{\mathcal{H}}=\frac{\partial}{\partial t}g(t),\qquad \frac{\partial}{\partial t}\bar{g}(t)\ \Big{|}_{\mathcal{V}}= 2f(t)\frac{\partial f}{\partial t} g_{F}.
\end{equation*}
The warped product structure also determines the corresponding decomposition of the curvature terms. From~\cite{bishop1969}, one has
\begin{equation*}
    \begin{split}
        \textup{Ric}_{\bar{g}(t)}\Big{|}_{\mathcal{H}}
= \textup{Ric}_{g(t)}-\frac{m}{f}\nabla^{2}_{g(t)}f,\quad
\textup{Ric}_{\bar{g}(t)}\Big{|}_{\mathcal{V}}=\textup{Ric}_{g_{F}}
- \big(f\Delta f + (m-1)\|\nabla f\|^{2}\big)g_{F}.
    \end{split}
\end{equation*}
Since $f(t)$ is a family of nonconstant warping functions, Proposition~\ref{Proposition-1} shows that $\varphi(\cdot,t)$ depends only on $B^{n}$. Consequently,
\begin{equation*}
    \begin{split}
        \nabla^{2}_{\bar{g}(t)}\varphi\Big{|}_{\mathcal{H}}=\nabla^{2}_{g(t)}\varphi,\qquad
        \nabla^{2}_{\bar{g}(t)}\varphi\Big{|}_{\mathcal{V}}=f\nabla f (\varphi) g_{F}.
    \end{split}
\end{equation*}
Moreover, the scalar curvature formula for warped products gives
\begin{equation*}
    \rho S_{\bar{g}(t)}\bar{g}(t)
= \rho\left(
S_{g(t)}
+ \frac{S_{F}}{f^{2}}
- 2m\frac{\Delta f}{f}
- m(m-1)\frac{\|\nabla f\|^{2}}{f^{2}}
\right) \bar{g}(t).
\end{equation*}
By restricting the Ricci-Bourguignon flow to the vertical distribution, we arrive at $\textup{Ric}_{g_{F}}=\mu g_{F}$, where
\begin{equation*}
\mu=\rho S_{F}-f\frac{\partial f}{\partial t}+(1-2m\rho)f\Delta f+(1-\rho m)(m-1)\|\nabla f\|^{2}-f\nabla f (\varphi)+\rho f^{2}S_{g(t)}.
\end{equation*}
Thus $(F^{m},g_F)$ is an Einstein manifold, and since $\mu=S_{F}/m$, we obtain~\eqref{thm1-02}.

On the other hand, restricting the Ricci-Bourguignon flow to the horizontal distribution, we obtain~\eqref{thm1-01}.

The converse is obtained by substituting equations \eqref{thm1-01} and \eqref{thm1-02} into the Ricci-Bourguignon flow equation.
\end{proof}

\section{Constructing warped metric solutions to the Ricci-Bourguignon flow} \label{CWMSRBF}

In this section, we combine Theorem~\ref{thm1} with an \textit{ansatz} method to construct explicit solutions to the Ricci-Bourguignon flow by reducing a PDE to an ODE. One of the main tools for identifying useful ansatz is the theory of Lie symmetry groups for PDEs (see Olver's book \cite{olver2000applications} or Bluman and Kumei's book \cite{bluman1989symmetries}). A classical example of this method is the Bryant soliton, which arises from a rotationally symmetric ansatz and yields a complete gradient steady Ricci soliton on $\mathbb{R}^{n}$, $n\geqslant3$, unique up to homothety (see Chow et al.~\cite{chow2007ricci}). Also, by using a rotationally symmetric ansatz, T.~Ivey~\cite{ivey1994new} obtained a one-parameter family of complete noncompact gradient steady Ricci solitons of doubly warped product type on $\mathbb{R}^{k+1}\times N$, where $N$ is a compact Einstein manifold with positive scalar curvature. In the same spirit, S.~Angenent and D.~Knopf~\cite{angenent2022ricci} constructed complete gradient shrinking Ricci solitons within a doubly warped product setting.

Here, we begin by looking for solutions $\bar{g}(x,t)=g(x,t)+f(x,t)^{2}g_{F}$ to the Ricci-Bourguignon flow~\eqref{Eq-RBFD}  with time-independent $\varphi$, Ricci-flat metric $g_{F}$ and
\[g(x,t)=a(t)g_{0}(x),\quad\qquad f(x,t)=b(t)f(x),\]
where $a(t)=1+c_{0}t>0$ and $b(t)=a(t)^{\frac{c_{1}}{c_{2}}}$ for some $c_{0}, c_{1}, c_{2}\in\mathbb{R}$, $c_{2}\neq0$. Thus, the time dependence is completely determined by a homothetic deformation of the base metric and the corresponding scaling of the warping function. It follows from Theorem~\ref{thm1} that the coupled flow equations \eqref{thm1-01}--\eqref{thm1-02} are then equivalent to the following system for the initial metric
\begin{equation}\label{2a}
\begin{split}
\textup{Ric}_{g_{0}}\!-\frac{m}{f}\nabla_{g_{0}}^{2}f+\nabla_{g_{0}}^{2}\varphi+\frac{c_{0}}{2} g_{0}=\rho\!\left(\!S_{g_{0}}\!
- 2m\frac{\Delta_{g_{0}} f}{f}
- m(m-1)\frac{\|\nabla f\|_{g_{0}}^{2}}{f^{2}}\!\right)\!g_{0}, 
\end{split}
\end{equation}
and
\begin{equation}\label{2b}
\begin{split}
\left(\frac{c_{0}c_{1}}{c_{2}}-\rho S_{g_0}\right)\!f^{2}
= (1-2m\rho)f\Delta_{g_0}f +(1-m\rho)(m-1)\|\nabla f\|_{g_0}^2 -f \nabla f(\varphi).
\end{split}
\end{equation}

Now, we introduce an ansatz on the base of the warped product. More precisely, consider the conformal metric $g_{0}=\mu^{-2}\langle\ ,\ \rangle$ on $\mathbb{R}^n$, where $\langle\ ,\ \rangle$ denotes the flat metric on $\mathbb{R}^{n}$. For a fixed unit vector $\alpha=\left(\alpha_1, \ldots, \alpha_n\right)\in\mathbb{R}^{n}$, we introduce the ansatz $\xi(x)=\langle x,\alpha\rangle$. Then, we look for smooth functions $\mu, f, \varphi:I\subseteq \mathbb{R} \rightarrow \mathbb{R}$ with $\mu>0$ and $f>0$ such that $\mu \circ \xi$, $f \circ \xi$ and $\varphi \circ \xi$
defined on $\Omega^{n}:=\xi^{-1}(I)$ satisfy~\eqref{2a} and~\eqref{2b}. For simplicity, we shall denote both the functions on $I$ and their pullbacks along $\xi$ by the same symbols. 

For the conformal metric $g_{0}=\mu^{-2} \langle\ ,\ \rangle$, the Ricci tensor and the scalar curvature are given by
\begin{equation}\label{ricci_base}
\begin{split}
(\textup{Ric}_{g_{0}})_{ij}&=\frac{1}{\mu^2}\left\{(n-2)\alpha_{i}\alpha_{j} \mu\mu^{\prime \prime}+\left[\mu\mu^{\prime \prime} -(n-1)\left(\mu^{\prime}\right)^2\right] \delta_{ij}\right\}\\
S_{g_{0}}&=\sum_{k=1}^n \mu^2\left(\textup{Ric}_{g_{0}}\right)_{k k}=(n-1)\left[2 \mu \mu^{\prime \prime}-n\left(\mu^{\prime}\right)^2\right].
\end{split}
\end{equation}
Moreover, for a smooth function $f$, the Hessian and the Laplacian with respect to  $g_{0}$ take the form
\begin{equation}\label{hes}
\begin{aligned}
(\nabla^{2}_{g_{0}}f)_{i j}&=\alpha_i \alpha_j f^{\prime \prime}+\left(2 \alpha_i \alpha_j-\delta_{i j} \right)\mu^{-1} \mu^{\prime} f^{\prime}\\[4pt]
\Delta_{g_{0}}f&=\sum_k \mu^2 (\nabla_{g_{0}}^{2}f)_{k k}=\mu^2\left[f^{\prime \prime}-(n-2) \mu^{-1} \mu^{\prime} f^{\prime}\right],
\end{aligned}
\end{equation}
while the remaining terms are described by
\begin{equation}\label{finais}
\begin{aligned}
\nabla f(\varphi)= \mu^2 f^{\prime} \varphi^{\prime},
\qquad\|\nabla f\|_{g_{0}}^2= \mu^2\left(f^{\prime}\right)^2.
\end{aligned}
\end{equation}

Hence, substituting \eqref{ricci_base}--\eqref{finais} into \eqref{2a}--\eqref{2b}, we arrive at the following characterization. 

\begin{proposition}\label{proposition-invariance}
Let $\xi$ be as above, and suppose that the functions $\mu$, $f$, and $\varphi$ depend only on $\xi$, namely, $\mu=\mu\circ\xi$, $f=f\circ\xi$, $\varphi=\varphi\circ\xi$. Let $(F^{m},g_{F})$ be a Ricci-flat manifold, and consider on $\Omega^{n}\times F^{m}$ the family of metrics
\[
\bar{g}(x,t)=(1+c_{0}t)\mu(x)^{-2}\langle\ ,\ \rangle+(1+c_{0}t)^{\frac{2c_{1}}{c_{2}}}f(x)^{2}g_{F},\quad c_{0},c_{1}\in\mathbb{R},\quad c_{2}\neq0.
\]
Then $\bar{g}(x,t)$ evolves by the Ricci-Bourguignon flow~\eqref{Eq-RBFD} with time-independent $\varphi$ if and only if $\mu$, $f$ and $\varphi$ satisfy
     
\begin{equation}\label{Prop2-1}
 (n-2) \frac{\mu^{\prime \prime}}{\mu}- m\frac{f^{\prime \prime}}{f} -2 m\frac{f^{\prime}}{f} \frac{\mu^{\prime}}{\mu} + \varphi^{\prime\prime}+2\frac{\mu^{\prime}}{\mu}\varphi^{\prime}=0,
\end{equation}

\begin{equation}\label{Prop2-2}
    \begin{aligned}
\left(1-2(n-1)\rho\right)\frac{\mu^{\prime \prime}}{\mu}-&(n-1)(1-n\rho)\left(\frac{\mu^{\prime}}{\mu}\right)^2+m\left(1-2(n-2)\rho\right) \frac{\mu^{\prime}}{\mu}\frac{f^{\prime}}{f}\\
&\qquad+2m\rho\frac{f''}{f}+m(m-1)\rho\left(\frac{f^{\prime}}{f}\right)^2-\frac{\mu^{\prime}}{\mu} \varphi^{\prime}+\frac{c_{0}}{2\mu^2} =0
\end{aligned}
\end{equation}
and
\begin{equation}\label{Prop2-3}
\begin{aligned}
-(1-2m\rho)\frac{f''}{f}
&+(n-2)(1-2m\rho)\frac{\mu'}{\mu}\frac{f'}{f}
-(m-1)(1-m\rho)\left(\frac{f'}{f}\right)^2
+\varphi'\frac{f'}{f} \\
&-2\rho(n-1)\frac{\mu''}{\mu}
+\rho n(n-1)\left(\frac{\mu'}{\mu}\right)^2
+\frac{c_0c_{1}}{c_{2}\mu^2}
=0.
\end{aligned}
\end{equation}
\end{proposition}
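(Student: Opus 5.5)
The plan is a direct substitution: Theorem~\ref{thm1} reduces the flow to the system \eqref{2a}--\eqref{2b}, and we insert the ansatz formulas \eqref{ricci_base}--\eqref{finais} and sort the result by tensor type.

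\textbf{Step 1 (reduction to \eqref{2a}--\eqref{2b}).} Take $g(t)=a(t)g_0$ and $f(x,t)=b(t)f(x)$ with $a=1+c_0t$ and $b=a^{c_1/c_2}$. Three scaling facts are used:
\begin{itemize}
\item $\textup{Ric}_{g(t)}=\textup{Ric}_{g_0}$ and $\nabla^2_{g(t)}=\nabla^2_{g_0}$;
\item $S_{g(t)}=a^{-1}S_{g_0}$, $\Delta_{g(t)}=a^{-1}\Delta_{g_0}$, and $\|\nabla f\|^2_{g(t)}=a^{-1}\|\nabla f\|^2_{g_0}$;
\item $\partial_t g=c_0g_0$ and $\partial_t f=\frac{c_0c_1}{c_2}a^{-1}f(t)$.
\end{itemize}
With $S_F=0$ and time-independent $\varphi$, every term of \eqref{thm1-01}--\eqref{thm1-02} carries the same factor $a(t)^0$ or $a(t)^{-1}$. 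The ratios $f'/f$, $\nabla^2 f/f$ and $\|\nabla f\|^2/f^2$ do not see the factor $b(t)$.

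Dividing out these factors gives \eqref{2a} and \eqref{2b}, and they are independent of $t$. Conversely, if \eqref{2a}--\eqref{2b} hold, they reproduce \eqref{thm1-01}--\eqref{thm1-02} for all $t$, so Theorem~\ref{thm1} gives the equivalence. The constant-$f$ case falls outside Theorem~\ref{thm1}, which needs $f$ nonconstant. I would handle it by checking the vertical and horizontal components directly, which gives the same equations.

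\textbf{Step 2 (splitting \eqref{2a}).} Substitute \eqref{ricci_base}, \eqref{hes} and \eqref{finais} into \eqref{2a}, using $\nabla^2\varphi$ computed by the same formula as $\nabla^2 f$. The result is a symmetric matrix identity of the form $A\,\alpha_i\alpha_j+B\,\delta_{ij}=0$. Since $\alpha$ is a unit vector and $n\ge2$, the matrices $\alpha\alpha^T$ and $I$ are linearly independent, so the identity holds if and only if $A=0$ and $B=0$.

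The $\alpha_i\alpha_j$ coefficient gives \eqref{Prop2-1} directly:
\[(n-2)\mu''/\mu-mf''/f-2m\mu'f'/(\mu f)+\varphi''+2\mu'\varphi'/\mu=0.\]
The $\delta_{ij}$ coefficient is multiplied through by $\mu^{-2}$, after expanding $g_0=\mu^{-2}\delta$, $S_{g_0}$, $\Delta_{g_0}f$ and $\|\nabla f\|^2$. This produces \eqref{Prop2-2}. In that equation $c_0/(2\mu^2)$ comes from $\frac{c_0}{2}g_0$, and the $\rho$-terms come from the right-hand side.

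\textbf{Step 3 (scalar equation \eqref{2b}).} Divide \eqref{2b} by $\mu^2f^2$ and insert $\Delta_{g_0}f=\mu^2[f''-(n-2)\mu'f'/\mu]$ together with the expression for $S_{g_0}$. Moving everything to one side yields \eqref{Prop2-3}.

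\textbf{Main obstacle.} The hard part is the bookkeeping of the $\delta_{ij}$ coefficient in Step 2. Several contributions must be combined correctly: $\mu\mu''-(n-1)\mu'^2$ from the Ricci tensor, the $-\delta_{ij}\mu'f'/\mu$ pieces of both Hessians, and the $\rho$-terms. For instance, $\mu''/\mu$ appears with coefficient $1-2(n-1)\rho$ and $\mu'f'$ with coefficient $m(1-2(n-2)\rho)$. The sign conventions must match \eqref{Prop2-2} exactly.

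Before doing this I would double-check \eqref{hes} for the conformal metric by verifying the Christoffel symbols of $g_0=\mu^{-2}\langle\,,\rangle$. The remaining steps are routine.
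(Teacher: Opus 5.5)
Your proposal is correct and follows the paper's route: reduce the flow to \eqref{2a}--\eqref{2b} via Theorem~\ref{thm1} and the scaling behaviour of $g(t)=a(t)g_0$, $f=b(t)f(x)$, then substitute \eqref{ricci_base}--\eqref{finais} and separate the $\alpha_i\alpha_j$ and $\delta_{ij}$ coefficients, and the three resulting equations agree term by term with \eqref{Prop2-1}--\eqref{Prop2-3}. Your two extra remarks are correct refinements that the paper leaves implicit: the coefficient splitting needs $n\geqslant 2$, and constant $f$ can be checked directly because $\varphi$ is already assumed to live on the base and $g_F$ is Ricci-flat.
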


The following three examples illustrate how the previous proposition can be used to construct explicit solutions to the Ricci-Bourguignon flow.

\begin{example}\label{Exa*} Let us consider the hyperbolic space $\mathbb{H}^{n}$, namely, the open half space $\mathbb{R}_{+}^{n}=
\{
(x_{1},  \dots, x_{n})\in\mathbb{R}^{n}: x_{n}>0
\}$ with the metric $g_{\mathbb{H}}=x_{n}^{-2}\langle\ ,\ \rangle$ and $(F^{m},g_{F})$ a complete Ricci-flat manifold. Besides, consider $\alpha_{1}=\cdots=\alpha_{n-1}=0$ and $\alpha_{n}=1$. Then the functions
\begin{equation*}
    \mu(x_{n})=x_{n},\qquad f(x_{n})=x_{n},\qquad \varphi(x_{n})=2m\ln (x_{n}).
\end{equation*}
satisfy \eqref{Prop2-1}, \eqref{Prop2-2} and \eqref{Prop2-3} provided that
\begin{equation*}
    c_{0}=2\left[(n+m-1)-\rho\left(n^{2}-n-2nm+m^2+3m\right)\right],
\end{equation*}
and
\begin{equation*}
    \dfrac{c_{0}c_{1}}{c_{2}}=-\left[(n+m-1)+\rho\left(n^{2}-n-2nm+m^2+3m\right)\right].
\end{equation*}
Therefore, the family of metrics
\[
\bar{g}(x,t)=(1+c_{0}t)x_{n}^{-2}\langle\ ,\ \rangle+(1+c_{0}t)^{\frac{2c_{1}}{c_{2}}}x_{n}^{2}g_{F}
\]
defines a family of complete metrics on $\mathbb{H}^{n}\times F^{m}$ evolving by the Ricci-Bourguignon flow~\eqref{Eq-RBFD}. In particular, $\bar{g}(x,t)$ is a purely homothetic self-similar solution if $2c_{1}=c_{2}$, and non-homothetic otherwise.

 For instance, if we take $n=2$, $m=1$, $\rho=1/3$, $c_{0}=8/3$, $c_{1}=-1$, $c_{2}=1$ and 
\begin{equation*}
    \mu(x_{2})=x_{2},\qquad f(x_{2})=x_{2},\qquad \varphi(x_{2})=2\ln(x_{2}),
\end{equation*}
then we obtain that 
\[
\bar{g}(x,t)=\left(1+\frac{8}{3}t\right)x_{2}^{-2}\langle\ ,\ \rangle+\left(1+\frac{8}{3}t\right)^{-2}x_{2}^{2}g_{F},\quad \mbox{for}\ \ t>-\frac{3}{8}
\]
is an immortal solution to the Ricci-Bourguignon flow~\eqref{Eq-RBFD} on $\mathbb{H}^{2}\times F^{1}$. 

On the other hand, if we take $n=2$, $m=1$, $\rho=2$, $c_{0}=-4$, $c_{1}=3$, $c_{2}=2$, then we obtain that
\[
\bar{g}(x,t)=\left(1-4t\right)x_{2}^{-2}\langle\ ,\ \rangle+\left(1-4t\right)^{3}x_{2}^{2}g_{F},\quad \mbox{for}\ \ t<\frac{1}{4}
\]
is an ancient solution to the Ricci-Bourguignon flow~\eqref{Eq-RBFD} on $\mathbb{H}^{2}\times F^{1}$. This solution is a non-homothetic self-similar solution to~\eqref{Eq-RBFD}. 
\end{example}

\begin{example}
    Let $\mathbb{R}_{\ast}^{n}=\{x=(x_{1},\dots, x_{n})\in\mathbb{R}^{n}:\xi(x)=\alpha_{1}x_{1}+\dots+\alpha_{n}x_{n}>0\}$ equipped with the metric $\mu^{-2}\langle\ ,\ \rangle$ and $(F^{m}, g_{F})$ a complete Ricci-flat manifold. Let us consider 
\begin{equation*}
    \mu(\xi)=\xi,\quad f(\xi)=\frac{L}{\xi},\quad \varphi(\xi)\equiv\varphi_{0}\in\mathbb{R}, \quad \rho=0, \quad c_{0}=2(m+n-1),\quad \frac{c_{1}}{c_{2}}=\frac{1}{2}.
\end{equation*}
Then, Proposition~\ref{proposition-invariance} shows that
\begin{equation*}
\bar{g}(x,t)=\left(1+c_{0}t\right)\xi(x)^{-2}\langle\ ,\ \rangle+\left(1+c_{0}t\right)L^{2}\xi(x)^{-2}g_{F}, \quad \mbox{for}\quad 1+c_{0}t>0
\end{equation*}
defines a family of complete metrics on $\mathbb{R}_{\ast}^{n}\times F^{m}$ evolving by the Ricci flow. To prove that $\bar{g}(x,t)$ is complete, it suffices to show that $g=\xi(x)^{-2}\langle\ ,\ \rangle$ is complete on $\mathbb{R}_{\ast}^{n}$. Since $\alpha\neq 0$, there exists an orthogonal map $A\in O(n)$ such that
\begin{equation*}
    \xi(x)=\|\alpha\|y_{n},\qquad y=Ax.
\end{equation*}
Since $A$ is orthogonal, the Euclidean metric is preserved, that is,
\begin{equation*}
\langle\ ,\ \rangle = dy_1^2+\cdots+dy_n^2,
\end{equation*}
and the domain $\mathbb{R}_*^n$ is transformed into $\{y\in\mathbb{R}^n:y_n>0\}$. Hence, $g=\|\alpha\|^{-2}g_{\mathbb{H}}$ which is complete.
\end{example}

\begin{example}
 Let $\mathbb{R}_{\ast}^{2}=\{x=(x_{1},x_{2})\in\mathbb{R}^{2}:\xi(x)=\alpha_{1}x_{1}+\alpha_{2}x_{2}>0\}$ equipped with the metric $\mu^{-2}\langle\ ,\ \rangle$ and $(F^{1}, g_{F})$ a complete one-dimensional manifold. Let us consider 
\begin{equation*}
    \mu(\xi)=e^{\xi},\quad f(\xi)=1+\frac{e^{-2\xi}}{2},\quad \varphi(\xi)=-\frac{e^{-2\xi}}{4}, \quad \rho=\frac{1}{4},\quad c_{0}=1\quad\mbox{and}\quad \frac{c_{1}}{c_{2}}=1.
\end{equation*}
Then, from  Proposition~\ref{proposition-invariance}, we have that 
\[
\bar{g}(x,t)=\left(1+t\right)e^{-2\xi(x)}\langle\ ,\ \rangle+\left(1+t\right)^{2}\left(1+\frac{e^{-2\xi(x)}}{2}\right)^{2}g_{F},\quad t>-1
\]
defines an immortal solution to the Ricci-Bourguignon flow~\eqref{Eq-RBFD} on $\mathbb{R}_{\ast}^{2}\times F^{1}$. In this case, the family $\bar{g}(x,t)$ is not complete since the metric on the base manifold is not complete.
\end{example}

The next example produces a metric that is a scaling of an Einstein metric, and no ansatz is required.

\begin{example}\label{Ex2}
Consider the real line $\mathbb{R}$ endowed with the one-parameter family of metrics
\begin{equation*}
g(t)=\bigl(1+2(n-1)t\bigr)ds^{2}, 
\qquad 
t\in\Bigl(-\tfrac{1}{2(n-1)},\infty\Bigr),
\end{equation*}
and let $\left(\mathbb{R}^{n-1}, g_{\mathrm{E}}\right)$ be the Euclidean fiber. Set
\begin{equation*}
a(t)=1+2(n-1)t
\quad \mbox{and} \quad
f(s,t)=\sqrt{a(t)}e^{s}.
\end{equation*}
We then consider the metric
\begin{equation*}
\bar g(t)=a(t)ds^{2}+f(s,t)^{2}g_{E}.
\end{equation*}
This metric can be written as $\overline{g}(t)=a(t)h$, where $h=ds^{2}+e^{2s}g_{E}$. The metric $h$ is Einstein and satisfies $\mathrm{Ric}_h=-(n-1)h$. Since $a(t)$ depends only on time, we have $\textup{Ric}_{\bar g(t)}=\textup{Ric}_h$. Moreover,
\begin{equation*}
\frac{\partial}{\partial t}\bar g(t)
=a'(t)h
=2(n-1)h
=-2\textup{Ric}_h
=-2\textup{Ric}_{\bar g(t)},
\end{equation*}
which shows that $\bar g(t)$ is an immortal solution to the Ricci flow. The warping function satisfies $\ln f(s,t)=s+\tfrac{1}{2}\ln a(t)$. Consequently, the growth condition~\eqref{sharp-condition} is sharp with respect to the spatial factor.
\end{example}

\section{Proof of the main results}
\begin{proof}[\bf Proof of Theorem~\ref{main}] 
In the proof of Theorem~\ref{main}, all differential operators and norms are taken with respect to the evolving metric $g(t)$. More precisely, $\Delta$, $\nabla$,
$\langle\cdot,\cdot\rangle$ and $\|\cdot\|$ denote the Laplacian,
the gradient, the metric inner product, and the
norm induced by $g(t)$, all depending on the time parameter $t$. 

We begin by fixing $p\in(0,\infty)$ and setting $h=p\ln u$. In this case, the parabolic equation~\eqref{Eq-Parabolic*} becomes
\begin{equation*}
\frac{\partial h}{\partial t}=\Delta_{\varphi} h-a\Delta h+\frac{1-a}{p}\|\nabla h\|^{2}+pb
+pce^{\frac{h}{p}(\alpha-1)}.\end{equation*}
Suppose that there exists a constant $q$ such that $q-h\geqslant \delta > 0$ for some $\delta$. Then, for all $(x,t)\in \mathcal{Q}_{R,T}$ the function
\begin{equation*}
G:=\frac{\|\nabla h\|^{2}}{(q-h)^{2}}
\end{equation*}
satisfies
\begin{equation}\label{Eq:06}
\begin{split}
\frac{1}{2}\left(\bar{a}\Delta_{\overline{\varphi}}-\frac{\partial}{\partial t}\right)G\geqslant&-\left(k_{1}+k_{2}\right)G+\frac{\bar{a}(p-q+h)}{p(q-h)}\langle\nabla h,\nabla G\rangle+\frac{\bar{a}(q-h)}{p}G^{2}\\[1pt]
&-\frac{p\langle\nabla h,\nabla b\rangle}{(q-h)^{2}}-\frac{pb}{q-h}G-\left(\alpha-1+\frac{p}{q-h}\right)c e^{\frac{h}{p}(\alpha-1)}G.
\end{split}
\end{equation}
where we use $\bar{a}=1-a$. In fact, first note that applying the Leibniz rule and using that $\overline{\varphi}=\frac{\varphi}{\bar{a}}$, we obtain
\begin{equation*}
\begin{split}
\frac{\partial}{\partial t}\|\nabla h\|^{2}=-&\left(\frac{\partial }{\partial t}g(t)\right)(\nabla h,\nabla h)+2\,\Big{\langle}\nabla h,\nabla \frac{\partial h}{\partial t}\Big{\rangle}\\
=-&\left(\frac{\partial }{\partial t}g(t)\right)(\nabla h,\nabla h)+2\bar{a}\langle\nabla h, \nabla\Delta_{\overline{\varphi}}h\rangle+\frac{2\bar{a}}{p}\langle\nabla h,\nabla \|\nabla h\|^{2}\rangle\\[1pt]
+&\,2p\langle\nabla h,\nabla b\rangle+2c(\alpha-1)(e^{\frac{h}{p}})^{\alpha-1}\|\nabla h\|^{2}.
\end{split}
\end{equation*}
Consequently,
\begin{equation}\label{Eq:Partial-G}
\begin{split}
\frac{\partial G}{\partial t}=&\dfrac{\frac{\partial}{\partial t}\|\nabla h\|^{2}}{(q-h)^{2}}+2\frac{\partial h}{\partial t}\dfrac{\|\nabla h\|^{2}}{(q-h)^{3}}\\[1pt]
=&-\dfrac{\left(\frac{\partial }{\partial t}g(t)\right)(\nabla h,\nabla h)}{(q-h)^{2}}+\dfrac{2\bar{a}\langle\nabla h, \nabla\Delta_{\overline{\varphi}}h\rangle}{(q-h)^{2}}+\frac{2\bar{a}}{p}\dfrac{\langle\nabla h,\nabla \|\nabla h\|^{2}\rangle}{(q-h)^{2}}\\[1pt]
&+\dfrac{2p\langle\nabla h,\nabla b\rangle}{\,(q-h)^{2}}+\dfrac{2\bar{a}\Delta_{\overline{\varphi}}h\|\nabla h\|^{2}}{(q-h)^{3}}+\frac{2\bar{a}}{p}\dfrac{\|\nabla h\|^{4}}{(q-h)^{3}}+\dfrac{2pb\|\nabla h\|^{2}}{(q-h)^{3}}\\[1pt]
&+2c\left(\alpha-1+\frac{p}{q-h}\right)(e^{\frac{h}{p}})^{\alpha-1}G.
\end{split}
\end{equation}
Now, from the definition of $G$, we compute
\begin{align*}
\Delta G&=\frac{\Delta\|\nabla h\|^2}{(q-h)^2}+\frac{4 \langle\nabla h, \nabla\| \nabla h\|^2\rangle}{(q-h)^3}+\frac{2\Delta h\|\nabla h\|^2 }{(q-h)^3}+\frac{6\|\nabla h\|^4}{(q-h)^4},\\[1pt]
\langle\nabla G, \nabla \overline{\varphi}\rangle&=\frac{\langle\nabla\|\nabla h\|^2, \nabla \overline{\varphi}\rangle}{(q-h)^2}+\frac{2\langle\nabla h, \nabla \overline{\varphi}\rangle\|\nabla h\|^2}{(q-h)^3}.
\end{align*}
Therefore,
\begin{equation}\label{Eq:Delta-varphi-G}
\begin{split}
\Delta_{\overline{\varphi}} G=&\frac{\Delta_{\overline{\varphi}}\|\nabla h\|^2}{(q-h)^2}+\frac{4\langle\nabla h, \nabla\| \nabla h\|^2\rangle}{(q-h)^3}+\frac{2 \Delta_{\overline{\varphi}} h\|\nabla h\|^2}{(q-h)^3}+\frac{6\|\nabla h\|^4}{(q-h)^4}\\[1pt]
=&\frac{2\|\nabla^{2}h\|^{2}}{(q-h)^{2}}+\frac{2\langle\nabla h, \nabla(\Delta_{\overline{\varphi}}h)\rangle}{(q-h)^2}+\frac{2\textup{Ric}^{\overline{\varphi}}(\nabla h,\nabla h)}{(q-h)^{2}}+\frac{4\langle\nabla h,\nabla\|\nabla h\|^{2}\rangle}{(q-h)^{3}}\\[1pt]
&+\frac{2\Delta_{\overline{\varphi}}h\|\nabla h\|^{2}}{(q-h)^{3}}+\frac{6\|\nabla h\|^{4}}{(q-h)^{4}},
\end{split}
\end{equation}
where in the last equality we have used the Bochner formula
\begin{equation*}
\frac{1}{2} \Delta_{\overline{\varphi}}\|\nabla h\|^2=\|\nabla^2 h\|^2+\langle\nabla h, \nabla(\Delta_{\overline{\varphi}} h)\rangle+\textup{Ric}^{\overline{\varphi}}(\nabla h, \nabla h).
\end{equation*}
Combining \eqref{Eq:Partial-G} and \eqref{Eq:Delta-varphi-G}, we get
\begin{equation}\label{Eq:Delta-partial-G}
\begin{split}
\left(\bar{a}\Delta_{\overline{\varphi}}-\frac{\partial}{\partial t}\right)G=\ &2\bar{a}\Bigg{\|}\frac{\nabla^{2}h}{q-h}+\frac{dh\otimes dh}{(q-h)^{2}}\Bigg{\|}^{2}+\frac{2\bar{a}\langle\nabla h,\nabla G\rangle}{q-h}-\frac{2\bar{a}}{p}\langle\nabla h,\nabla G\rangle\\[1pt]
&+\frac{\left(\frac{\partial}{\partial t}g(t)+2\bar{a}\textup{Ric}^{\overline{\varphi}}_{g(t)}\right)(\nabla h,\nabla h)}{(q-h)^{2}}-\frac{2p\langle\nabla h,\nabla b\rangle}{(q-h)^{2}}\\[1pt]
&-\frac{2pb\|\nabla h\|^{2}}{(q-h)^{3}}-2c\left(\alpha-1+\frac{p}{q-h}\right)(e^{\frac{h}{p}})^{\alpha-1}G\\[1pt]
&+\frac{2\bar{a}}{p}\frac{\|\nabla h\|^{4}}{(q-h)^{3}},
\end{split}
\end{equation}
where we have used 
\begin{equation*}
\begin{split}
2\bar{a}\,\Bigg{\|}\frac{\nabla^{2}h}{q-h}+\frac{dh\otimes dh}{(q-h)^{2}}\Bigg{\|}^{2}=&\,\frac{2\bar{a}\|\nabla^{2}h\|^{2}}{(q-h)^{2}}+\frac{2\bar{a}\langle\nabla h,\nabla\|\nabla h\|^{2}\rangle}{(q-h)^{3}}+\frac{2\bar{a}\|\nabla h\|^{4}}{(q-h)^{4}},\\[1pt]
\frac{2\bar{a}\langle\nabla h,\nabla G\rangle}{q-h}=&\,\frac{2\bar{a}\langle\nabla h,\nabla\|\nabla h\|^{2}\rangle}{(q-h)^{3}}+\frac{4\bar{a}\|\nabla h\|^{4}}{(q-h)^{4}},\\[1pt]
\frac{2\bar{a}\langle\nabla h,\nabla G\rangle}{p}=&\,\frac{2\bar{a}}{p}\frac{\langle\nabla h,\nabla\|\nabla h\|^{2}\rangle}{(q-h)^{2}}+\frac{4\bar{a}}{p}\frac{\|\nabla h\|^{4}}{(q-h)^{3}}.
\end{split}
\end{equation*}
Along the family of metrics $g(t)$, one has
\begin{equation}\label{Eq:Estimates}
\begin{split}
\frac{\partial}{\partial t}g(t)+2\bar{a}\textup{Ric}^{\overline{\varphi}}_{g(t)}&=\frac{\partial}{\partial t}g(t)+2\bar{a}\textup{Ric}_{g(t)}+2\nabla^{2}\varphi\geqslant -2k_{2}g(t)-2k_{1}g(t).
\end{split}
\end{equation}
Combining \eqref{Eq:Delta-partial-G} with \eqref{Eq:Estimates}, we arrive at \eqref{Eq:06}.

The desired parabolic gradient estimate is obtained by means of a localization argument combined with a cut-off function and an application of the maximum principle to a suitably defined function. Let $r(x,t)=\mathrm{dist}_{t}(x,x_{0})$, where $x_{0}\in M$ is fixed. We then introduce the following function
\begin{equation}\label{thm-2:eq4}\phi:M\times \left[t_{0}-T, t_{0}\right]\longrightarrow\mathbb{R},\quad \phi(x,t)=\overline{\phi}(r(x,t),t),
\end{equation}
where $\overline{\phi}$ is a smooth cut-off function given by the following lemma:
\begin{lemma}[\cite{bailesteanu2010gradient}]Fix $t_{0}\in\mathbb{R}$ and let $R, T>0$. For any $\tau\in(t_{0}-T,t_{0}]$ there exists a smooth function $\bar{\phi}:[0, \infty) \times\left[t_{0}-T, t_{0}\right] \rightarrow \mathbb{R}$ satisfying 
\begin{enumerate}[(a)]
\item [\textup{(a)}] $0 \leqslant \bar{\phi}(r, t) \leqslant 1$ in $[0, R] \times\left[t_{0}-T, t_{0}\right]$, and $\textup{supp}(\bar{\phi})\subset [0, R] \times\left[t_{0}-T, t_{0}\right]$.\vspace{0.1cm}
\smallskip
\item [\textup{(b)}] $\bar{\phi}(r, t)=1$ in $[0, \frac{R}{2}] \times\left[\tau, t_0\right]$ and $\frac{\partial}{\partial r} \bar{\phi}(r, t)=0$ in $[0, \frac{R}{2}] \times\left[t_{0}-T, t_{0}\right]$.\vspace{0.1cm}
\smallskip
\item [\textup{(c)}] $\bar{\phi}\left(r, t_{0}-T\right)=0$ for all $r \in[0, \infty)$ and there exists a constant $c$ such that
$$\left|\frac{\partial \bar{\phi}}{\partial t}\right| \leqslant \dfrac{C}{\tau-t_{0}+T} \bar{\phi}^{\frac{1}{2}},\qquad\mbox{in}\quad  [0, \infty) \times\left[t_{0}-T, t_{0}\right].$$ 
\smallskip
\item [\textup{(d)}] For any $\epsilon \in(0,1)$ there exist $C_\epsilon>0$ such that
$$-\dfrac{C_\epsilon}{R} \bar{\phi}^\epsilon \leqslant \frac{\partial \bar{\phi}}{\partial r} \leqslant 0,\qquad \left|\frac{\partial^{2}\bar{\phi}}{\partial r^{2}} \right| \leqslant \dfrac{C_\epsilon}{R^2} \bar{\phi}^\epsilon\qquad\mbox{in}\quad[0, \infty) \times\left[t_{0}-T, t_{0}\right].$$ 
\end{enumerate}
\end{lemma}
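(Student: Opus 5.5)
The plan is to construct $\bar\phi$ explicitly as a product of a spatial profile and a temporal profile,
\[
\bar\phi(r,t)=\psi\!\left(\frac{r}{R}\right)\theta(t),
\]
where $\psi:[0,\infty)\to[0,1]$ depends only on $r$ and $\theta:[t_0-T,t_0]\to[0,1]$ depends only on $t$. With this splitting, each of the properties (a)–(d) reduces to a one-variable statement about $\psi$ or about $\theta$. The scaling $r/R$ then produces the factors $1/R$ and $1/R^2$ in (d).

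For the spatial factor I take $\psi$ smooth and nonincreasing, with $\psi\equiv1$ on $[0,\tfrac12]$ and $\psi\equiv0$ on $[1,\infty)$. Near $s=1$ I prescribe it to behave like $e^{-1/(1-s)}$; concretely, $\psi=\zeta(s)\,e^{-1/(1-s)}/e^{-1/(1-s)}|_{\ldots}$ is not needed, and it suffices to glue $\exp\bigl(-\tfrac{1}{1-s}\bigr)$ near $1$ to the constant $1$ near $\tfrac12$ by a monotone smooth transition. Then $\psi'$ and $\psi''$ are, near $s=1$, of the form $P\bigl(\tfrac{1}{1-s}\bigr)e^{-1/(1-s)}$ with $P$ a polynomial. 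Hence for every $\epsilon\in(0,1)$,
\[
\psi^{-\epsilon}|\psi'| ,\ \psi^{-\epsilon}|\psi''|\ \lesssim\ P\!\left(\tfrac{1}{1-s}\right)e^{-(1-\epsilon)/(1-s)},
\]
and this is bounded as $s\to1^-$. Away from $s=1$ the ratios are bounded because $\psi$ is bounded below on compact subsets of $[0,1)$. This yields constants $C_\epsilon$ with $|\psi'|\le C_\epsilon\psi^\epsilon$ and $|\psi''|\le C_\epsilon\psi^\epsilon$. The chain rule then gives (d), using $0\le\theta\le1$ so that $\theta\le\theta^\epsilon$. The sign $\partial_r\bar\phi\le0$ follows from the monotonicity of $\psi$. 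The flat piece of $\psi$ gives $\partial_r\bar\phi=0$ on $[0,R/2]$, and the support of $\psi$ gives the support condition in (a).

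For the time factor I set
\[
\theta(t)=\chi_0\!\left(\frac{t-t_0+T}{\tau-t_0+T}\right)^{2},
\]
where $\chi_0$ is smooth and nondecreasing, with $\chi_0(0)=0$ and $\chi_0\equiv1$ on $[1,\infty)$. Then $\theta(t_0-T)=0$ and $\theta\equiv1$ on $[\tau,t_0]$, which together with $\psi=1$ on $[0,\tfrac12]$ gives (b) and the first part of (c). Moreover,
\[
|\theta'|=\frac{2\chi_0|\chi_0'|}{\tau-t_0+T}\le\frac{C}{\tau-t_0+T}\,\theta^{1/2}.
\]
Multiplying by $\psi\le\psi^{1/2}$ gives the time-derivative bound in (c) for $\bar\phi$.

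The only delicate point is the uniform-in-$\epsilon$ requirement in (d): one fixed function must satisfy $|\psi'|\le C_\epsilon\psi^\epsilon$ for every $\epsilon<1$. A power profile $\zeta^k$ only works for $\epsilon\le (k-2)/k$. This is why I use the exponentially flat profile near $s=1$, and checking its derivative bounds is where the real work lies.
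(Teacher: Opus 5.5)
Your construction is correct and follows the standard route. The paper does not prove this lemma but only cites \cite{bailesteanu2010gradient}, and a nonincreasing radial profile in $r/R$ multiplied by a squared time cut-off is how such cut-offs are normally built in this line of work; your exponentially flat profile at $s=1$ is what lets a single $\bar\phi$ satisfy (d) for every $\epsilon\in(0,1)$. The one thing to fix is the garbled gluing sentence: use an explicit choice such as $\psi(s)=\exp\bigl(-\eta(s)/(1-s)\bigr)$, with $\eta$ smooth and nondecreasing, $\eta=0$ on $[0,\tfrac12]$ and $\eta=1$ near $s=1$, which makes explicit the monotonicity and the positivity of $\psi$ on $[0,1)$ that your ``away from $s=1$'' step relies on.
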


Now, starting with the localized function $\phi G$, it is clear that this choice puts us in the right setting to invoke the maximum principle; in particular, by applying the method to $\phi G$, we obtain the following differential inequality:
\begin{equation}\label{estimate1}
\begin{split}
\dfrac{1}{2}\left(\bar{a}\Delta_{\overline{\varphi}}-\frac{\partial}{\partial t}\right)&(\phi G)-
\bar{a}\left[\dfrac{(p-q+h)}{p(q-h)}\nabla h+\dfrac{\nabla\phi}{\phi}\right]
\nabla(\phi G)=\\ 
&=\dfrac{1}{2}\bar{a}\phi\Delta_{\overline{\varphi}}G+\bar{a}\langle\nabla\phi,\nabla G\rangle+\dfrac{1}{2}\bar{a}G\Delta_{\overline{\varphi}}\phi-
\frac{1}{2}\left(G\frac{\partial\phi}{\partial t}+\phi\frac{\partial G}{\partial t}\right)\\ 
&\quad-\bar{a}
\left[\dfrac{(p-q+h)}{p(q-h)}\nabla h+\dfrac{\nabla\phi}{\phi}\right](\phi\nabla G+G\nabla\phi)\\ 
&=
\dfrac{1}{2}\bar{a}\phi\Delta_{\overline{\varphi}}G-\frac{1}{2}\phi\frac{\partial G}{\partial t}+
\dfrac{1}{2}\bar{a} G\Delta_{\overline{\varphi}}\phi-\frac{1}{2}G\frac{\partial \phi}{\partial t}-\bar{a}\dfrac{\|\nabla\phi\|^{2}}{\phi}G\\ 
&\quad-\bar{a}\dfrac{\phi(p-q+h)}{p(q-h)}\langle\nabla h,\nabla G\rangle-\bar{a}\dfrac{(p-q+h)}{p(q-h)}\langle\nabla h, \nabla\phi\rangle G.
\end{split}
\end{equation}

Now, let $(x_1, t_1)$ be a maximum point for the function $\phi G$ in the set $\mathcal{Q}_{R,T}$. If $(\phi G)\left(x_1, t_1\right) \leqslant 0$, then $(\phi G)(x, \tau) \leqslant 0$ for all $x \in M$ such that $r(x,\tau) \leqslant R$. Note that $\phi(x, \tau) \equiv 1$ for all $x \in M$ satisfying $r(x,\tau)\leqslant \frac{R}{2}$. This implies that $G(x, \tau) \leqslant 0$ when $r(x,\tau) \leqslant \frac{R}{2}$. Since $\tau$ is arbitrary, it follows that \eqref{gradient-estimates} holds on $\mathcal{Q}_{\frac{R}{2}, T}$. 

On the other hand, assume that $(\phi G)(x_1, t_1)>0$. We first address the possible lack of smoothness of the distance function at $(x_1, t_1)$. If $x_{1}\notin \textup{Cut}_{g(t_{1})}(x_{0})$ then $r$, and hence $\phi$, is smooth in a neighborhood of $(x_1, t_1)$, and the usual maximum principle applies directly. Now, suppose that $x_{1}\in \textup{Cut}_{g(t_{1})}(x_{0})$. Let $\gamma:[0,r(x_{1},t_{1})]\rightarrow M$, be a minimizing geodesic associated with the metric $g(t_{1})$ joining $x_{0}$ to $x_{1}$. For sufficiently small $\varepsilon>0$, set $x_{\varepsilon}=\gamma(\epsilon)$ and define
$$
r_{\varepsilon}(x,t)=\textup{dist}_{t}(x_{0},x_{\varepsilon})+\textup{dist}_{t}(x_{\varepsilon},x).
$$
By the triangle inequality, $r(x,t)\leqslant r_{\varepsilon}(x,t)$, while $r_{\varepsilon}(x_{1},t_{1})=r(x_{1},t_{1})$. By the standard Calabi regularization argument~\cite{calabi1958extension}, $r_{\varepsilon}$ is smooth in a neighborhood of $(x_{1},t_{1})$. Setting $\phi_{\varepsilon}(x,t)=\overline{\phi}(r_{\varepsilon}(x,t),t)$ and recalling that $\overline{\phi}_{r}\leqslant0$, we obtain $\phi_{\varepsilon}\leqslant\phi$ and $\phi_{\varepsilon}(x_{1},t_{1})=\phi(x_{1},t_{1})$. Hence $\phi_{\varepsilon}G$ also attains a local maximum at $(x_{1},t_{1})$. Therefore, all the spatial differentiations below may be performed with $\phi_{\varepsilon}$, and the corresponding estimates for $\phi$ are recovered by letting $\varepsilon\to0$. For simplicity, we continue to write $\phi$. Consequently, at $(x_{1},t_{1})$
\begin{equation}\label{Laplacian-condition}
\Delta_{\overline{\varphi}}(\phi G) \leqslant 0, \quad \nabla(\phi G)=0 \quad \mbox{ and } \quad \frac{\partial}{\partial t}(\phi G) \geqslant 0 .
\end{equation}

So, combining \eqref{Eq:06}, \eqref{estimate1}, and \eqref{Laplacian-condition}, we obtain the following estimate at the point $(x_{1},t_{1})$.
\begin{equation}\label{Eq:07}
\begin{split}
\frac{\bar{a}(q-h)}{p}\phi G^{2}&\leqslant \underbrace{\left(k_{1}+k_{2}\right)\phi G}_{\textup{I}}
+\underbrace{\frac{p\phi\langle\nabla h,\nabla b\rangle}{(q-h)^{2}}}_{\textup{II}}+\underbrace{\frac{pb}{q-h}\phi G}_{\textup{III}} \\
&+\underbrace{\left(\alpha-1+\frac{p}{q-h}\right)cu^{\alpha-1}\phi G}_{\textup{IV}}+\underbrace{\dfrac{\bar{a}(p-q+h)}{p(q-h)}\langle\gradient{h},\gradient{\phi}\rangle G}_{\textup{V}}\\
&\quad+\underbrace{\bar{a}\dfrac{\norm{\gradient{\phi}}^{2}}{\phi}G}_{\textup{VI}}-\underbrace{\dfrac{1}{2}\bar{a}G\driftedlaplacian{}{\overline{\varphi}}{\phi}}_{\textup{VII}}
+\underbrace{\frac{1}{2}G\frac{\partial \phi}{\partial t}}_{\textup{VIII}}.
\end{split}
\end{equation}

To estimate the right-hand side of the above inequality, we distinguish between the two cases: $r(x_{1},t_1)\geqslant 1$ and $r(x_1,t_1)<1$.

\smallskip
\textbf{Case 1:} $r(x_{1},t_{1})\geqslant1$. The following estimates are obtained by repeated application of the Cauchy--Schwarz and Young inequalities.

\vspace{0.1cm}
\noindent\textbf{Estimating $\textup{I}$}:
\begin{equation*}
\begin{split}
\left(k_{1}+k_{2}\right)\phi G&\leqslant\dfrac{\bar{a}\delta}{16p}\phi G^{2}+c_1(\bar{a},\delta,p)k_{1}^2+c_2(\bar{a},\delta,p)k_{2}^{2}.
\end{split}
\end{equation*}
\noindent\textbf{Estimating $\textup{II}$:}
\begin{equation*}
\begin{split}
\dfrac{p\phi}{(q-h)^{2}}\langle\nabla h,\nabla b\rangle\leqslant \dfrac{p\phi}{(q-h)^{2}}\|\nabla h\|\|\nabla b\|&=\dfrac{p\phi\|\nabla b\|G^{\frac{1}{2}}}{q-h}\\
&\leqslant\dfrac{\bar{a}\delta}{16p}\phi G^{2}+c(\bar{a},\delta, p)\sup_{\mathcal{Q}_{R,T}}\|\nabla b\|^{\frac{4}{3}}.
\end{split}
\end{equation*}

\noindent\textbf{Estimating $\textup{III}$:}
\begin{equation*}
\begin{split}
\dfrac{pb}{q-h}\phi G&\leqslant \dfrac{\bar{a}\delta}{16p}\phi G^{2}+c(\bar{a},p,\delta)(b^{+})^{2}.
\end{split}
\end{equation*}

\noindent\textbf{Estimating $\textup{IV}$:}
\begin{align*}
\!\left(\!\alpha\!-\!1\!+\!\dfrac{p}{q-h}\right)\!cu^{\alpha-1}\phi G 
&\leqslant \left[\left(\alpha-1+\dfrac{p}{q-h}\right)c \right]^{+}u^{\alpha-1}\phi G\\
&\leqslant\dfrac{\bar{a}\delta}{16p}\phi G^{2}\!+\!c(\bar{a},\delta,p)\sup_{\mathcal{Q}_{R,T}}\left\{\!\left[\!\left(\!\alpha\!-\!1\!+\!\dfrac{p}{q-h}\right)\!c\right]^{+}\!\right\}^{2}\!u^{2(\alpha-1)}.
\end{align*}

\noindent \textbf{Estimating $\textup{V}$}:
\begin{align*}
\dfrac{\bar{a}(p-q+h)}{p(q-h)}\langle\nabla h,\nabla\phi\rangle G &\leqslant
\dfrac{\bar{a}|p-q+h|}{p}\norm{\gradient{\phi}}G^{\frac{3}{2}}\\
&=\left[\frac{\bar{a}}{2p}\phi(q-h)G^{2}\right]^{\frac{3}{4}}\dfrac{\bar{a}^{\frac{1}{4}}\left(\frac{4p}{3}\right)^{\frac{3}{4}}\norm{\gradient{\phi}}|p-q+h|}{p\left[\frac{2}{3}\phi(q-h)\right]^{\frac{3}{4}}}\\
&\leqslant\dfrac{\bar{a}}{2p}\phi(q-h)G^{2}+C\left(\dfrac{\norm{\gradient{\phi}}}{\phi^{\frac{3}{4}}}\right)^{4}\frac{\bar{a}|p-q+h|^{4}}{p(q-h)^{3}}\\
&\leqslant\dfrac{\bar{a}}{2p}\phi(q-h)G^{2}+\frac{C}{R^{4}}\frac{\bar{a}|p-q+h|^{4}}{p (q-h)^{3}}.
\end{align*}

\noindent\textbf{Estimating $\textup{VI}$:}
\begin{equation*}
\begin{split}
\bar{a}\dfrac{\|\nabla\phi\|^{2}}{\phi}G=\phi^{\frac{1}{2}}G\bar{a}\dfrac{\|\nabla \phi\|^{2}}{\phi^{\frac{3}{2}}}&
\leqslant\dfrac{\bar{a}\delta}{16p}\phi G^{2}+c(\bar{a},\delta,p)\left(\dfrac{\|\nabla \phi\|^{2}}{\phi^{\frac{3}{2}}}\right)^{2}\\
&
\leqslant\dfrac{\bar{a}\delta}{16p}\phi G^{2}+\dfrac{c(\bar{a},\delta,p)}{R^{4}}.
\end{split}
\end{equation*}

Now for the $\Delta_{\overline{\varphi}} \phi$ term, we use the weighted Laplacian comparison theorem due to G. Wei and W. Wylie~\cite[Theorem~3.1]{wei2009comparison}. Since $\textup{Ric}^{\overline{\varphi}}_{g(t)}\geqslant-\frac{k_{1}}{\bar{a}}g(t)$ and $1 \leqslant r(x_{1},t_{1})\leqslant R$, we have
\begin{equation*}
\Delta_{\overline{\varphi}} r \leqslant \Gamma_{{\overline{\varphi}}}+(R-1)\frac{k_1}{\bar{a}},
\end{equation*}
whenever $1 \leqslant r \leqslant R$ and  $t_{0}-T\leqslant t \leqslant t_{0}$. Here, we have set
\begin{equation*}
\Gamma_{\overline{\varphi}}=\sup _{(x, t)\in \mathcal{Q}_{R, T}}\left\{\Delta_{\overline{\varphi}} r(x, t):  r(x,t)=1\right\}.
\end{equation*}
We then estimate $\textup{VII}$ as follows.

\vspace{0.1cm}
\noindent\textbf{Estimating $\textup{VII}$:}
\begin{equation*}
    \begin{split}
-\dfrac{1}{2}\bar{a}G\Delta_{\overline{\varphi}}\phi
&=-\dfrac{1}{2}\bar{a}G\left[\overline{\phi}' \Delta_{\overline{\varphi}}r+\overline{\phi}''
\|\nabla r\|^{2}\right]\\
&\leqslant-\dfrac{1}{2}\bar{a}G\left\{\overline{\phi}'\big{[}\Gamma_{\overline{\varphi}}+
\frac{k_{1}}{\bar{a}}(R-1)\big{]}+
\overline{\phi}''\right\}
\\
&\leqslant \bar{a}G\left\{|\overline{\phi}'|[(\Gamma_{\overline{\varphi}})^{+}+
\frac{k_{1}}{\bar{a}}(R-1)]+
|\overline{\phi}''|\right\}\\
&\leqslant \phi^{\frac{1}{2}} G\bar{a}\left\{\dfrac{|\overline{\phi}'|}{\phi^{\frac{1}{2}}}[(\Gamma_{\overline{\varphi}})^{+}+
\frac{k_{1}}{\bar{a}}(R-1)]\right\}+
\phi^{\frac{1}{2}}G\bar{a}\dfrac{|\overline{\phi}''|}{\phi^{\frac{1}{2}}}\\
&\leqslant\dfrac{\bar{a}\delta}{16p}\phi G^{2}+c(\bar{a},\delta,p)\dfrac{|\overline{\phi}''|^{2}}{\phi}+c(\bar{a},\delta,p)[(\Gamma_{\overline{\varphi}})^{+}]^{2}\dfrac{|\overline{\phi}'|^{2}}{\phi}\\
&\quad+c(\bar{a},\delta,p)k_{1}^{2}(R-1)^{2}\dfrac{|\overline{\phi}'|^{2}}{\phi}\\
&\leqslant\dfrac{\bar{a}\delta}{16p}\phi G^{2}+\dfrac{c(\bar{a},\delta,p)}{R^{4}}+\dfrac{c(\bar{a},\delta,p)[(\Gamma_{\overline{\varphi}})^{+}]^{2}}{R^{2}}+c( \bar{a},\delta,p)k_{1}^{2}.
\end{split}
\end{equation*}

\vspace{0.1cm}
\noindent\textbf{Estimating $\textup{VIII}$:}
\begin{equation*}
\begin{split}
\frac{1}{2}G\frac{\partial\phi}{\partial t}
&\leqslant G\left(\left|\frac{\partial\bar{\phi}}{\partial t}\right|+k_{2}R \left|\frac{\partial\bar{\phi}}{\partial r}\right|\right)\\
&\leqslant G\left(\frac{\left|\frac{\partial\bar{\phi}}{\partial t}\right|}{\bar{\phi}^{\frac{1}{2}}}+R \frac{k_{2}\left|\frac{\partial\bar{\phi}}{\partial r}\right|}{\bar{\phi}^{\frac{1}{2}}}\right) \bar{\phi}^{\frac{1}{2}} \\
&\leqslant  \frac{c\left[1+(\tau-t_{0}+T)k_{2}\right]}{\tau-t_{0}+T} \bar{\phi}^{\frac{1}{2}}G\\
&\leqslant\frac{\bar{a}\delta}{16p}\phi G^2+c(\bar{a},\delta,p)\left[\frac{1}{(\tau-t_{0}+T)^2}+k_{2}^{2}\right].
\end{split}
\end{equation*}
Next, we justify the estimate for the time derivative of $r(x,t)$ in the previous inequality. Fix $x \in M$ and $t \in\left[t_0-T, t_0\right]$ such that $r(x, t)<R$. From the assumption
$$
\frac{\partial}{\partial t} g(t) \geqslant-2 k_2 g(t)
$$
it follows that, for every $s>0$ such that $t+s \in\left[t_0-T, t_0\right]$, we have $g(t+s) \geq e^{-2 k_2 s} g(t)$.
Hence, for every piecewise smooth curve $\omega:[0,1] \rightarrow M$ joining $x_0$ to $x$,
$$
\int_0^1 \sqrt{g(t+s)\left(\omega^{\prime}(\tau), \omega^{\prime}(\tau)\right)} d \tau \geqslant e^{-k_2 s} \int_0^1 \sqrt{g(t)\left(\omega^{\prime}(\tau), \omega^{\prime}(\tau)\right)} d \tau .
$$
Taking the infimum over all such curves, we obtain
$$
r(x, t+s)=\textup{dist}_{t+s}\left(x, x_0\right) \geqslant e^{-k_2 s} \textup{dist}_t\left(x, x_0\right)=e^{-k_2 s} r(x, t).
$$
Therefore,
$$
\frac{r(x, t+s)-r(x, t)}{s} \geqslant \frac{e^{-k_2 s}-1}{s} r(x, t).
$$
At every point where $r$ is smooth, letting $s \rightarrow 0^{+}$ gives
$$
\frac{\partial r}{\partial t}(x, t) \geqslant-k_2 r(x, t) \geqslant-k_2 R.
$$
Notice that this estimate follows directly from the evolution inequality for the metric and, in particular, does not depend on the choice or the time variation of a minimizing geodesic.

If the maximum point $\left(x_1, t_1\right)$ lies in the cut locus, we use the regularized distance $r_{\varepsilon}$ introduced above. Applying the same metric comparison to the two distance terms in
$$
r_{\varepsilon}(x, t)=\textup{dist}_t\left(x_0, x_{\varepsilon}\right)+\textup{dist}_t\left(x_{\varepsilon}, x\right),
$$
we obtain
$$
r_{\varepsilon}(x, t+s) \geqslant e^{-k_2 s} r_{\varepsilon}(x, t).
$$
Since $r_{\varepsilon}$ is smooth in a neighborhood of $\left(x_1, t_1\right)$, it follows that
$$
\frac{\partial r_{\varepsilon}}{\partial t}\left(x_1, t_1\right) \geqslant-k_2 r_{\varepsilon}\left(x_1, t_1\right).
$$
Moreover, by the choice of $x_{\varepsilon}$,
$$
r_{\varepsilon}\left(x_1, t_1\right)=\operatorname{dist}_{t_1}\left(x_0, x_{\varepsilon}\right)+\operatorname{dist}_{t_1}\left(x_{\varepsilon}, x_1\right)=r\left(x_1, t_1\right),
$$
and hence
\[
\frac{\partial r_{\varepsilon}}{\partial t}\left(x_1, t_1\right) \geqslant-k_2 r\left(x_1, t_1\right) \geqslant-k_2 R.
\]
Thus, the same time derivative estimate remains valid under the Calabi regularization at the maximum point.
 
Therefore, at $(x_{1}, t_{1})$, we have the following estimate
\begin{equation*}
\begin{split}
\phi G^{2}&\leqslant c(\bar{a},\delta,p)k_{1}^{2}+c(\bar{a},\delta,p)k_{2}^{2}+\frac{c(\bar{a},\delta,p)}{R^{4}}+\frac{c(\bar{a},\delta,p)}{(\tau-t_{0}+T)^{2}}+\frac{c(\bar{a},\delta,p)[(\Gamma_{\overline{\varphi}})^{+}]^2}{R^2}\\
&\quad+c(\bar{a},\delta,p)\left(\sup_{\mathcal{Q}_{R,T}}b^{+}\right)^{2}+c(\delta,p)\sup_{\mathcal{Q}_{R,T}}\left\{\left[\left(\alpha-1+\dfrac{p}{q-h}\right)c\right]^{+}\right\}^{2}u^{2(\alpha-1)}\\
&\quad+c(\bar{a},\delta, p)\sup_{\mathcal{Q}_{R,T}}\norm{\gradient{b}}^{\frac{4}{3}},
\end{split}
\end{equation*}
where we have used that
\begin{equation*}
\frac{|p-q+h|}{q-h}\leqslant\frac{p+|h-q|}{q-h}\leqslant\frac{p}{q-h}+1\leqslant\frac{p}{\delta}+1.
\end{equation*}
So,
\begin{equation*}
\begin{split}
    (\phi^{2} G^{2})(x_{1},t_{1})&\leqslant (\phi G^{2})(x_{1},t_{1})\\
&\leqslant c(\bar{a},\delta,p)k_{1}^{2}+c(\bar{a},\delta,p)k_{2}^{2}+\frac{c(\bar{a},\delta,p)}{R^{4}}+\frac{c(\bar{a},\delta,p)}{(\tau-t_{0}+T)^{2}}\\
&\quad+c(\bar{a},\delta,p)\left(\sup_{\mathcal{Q}_{R,T}}b^{+}\right)^{2}+c(\bar{a},\delta, p)\sup_{\mathcal{Q}_{R,T}}\|\nabla b\|^{\frac{4}{3}}+\frac{c(\bar{a},\delta,p)[(\Gamma_{\overline{\varphi}})^{+}]^2}{R^2}\\
&\quad+c(\delta,p)\sup_{\mathcal{Q}_{R,T}}\left\{\left[\left(\alpha-1+\dfrac{p}{q-h}\right)c\right]^{+}\right\}^{2}u^{2(\alpha-1)}.
\end{split}
\end{equation*}
Since $\phi(x,\tau)=1$ when $r(x,\tau)<\frac{R}{2}$, it follows that
\begin{equation*}
\begin{split}
G(x,\tau)&= (\phi G)(x,\tau)\\
&\leqslant (\phi G)(x_{1},t_{1})\\
& \leqslant c(\bar{a},\delta,p)k_{1}+c(\bar{a},\delta,p)k_{2}+\frac{c(\bar{a},\delta,p)}{R^{2}}+\frac{c(\bar{a},\delta,p)}{\tau-t_{0}+T}\\
&\quad+c(\bar{a},\delta,p) \sup_{\mathcal{Q}_{R,T}} b^{+}+c(\delta,p)\sup_{\mathcal{Q}_{R,T}}\left\{\left[\left(\alpha-1+\dfrac{p}{q-h}\right)c\right]^{+}\right\}u^{(\alpha-1)}\\
&\quad+c(\bar{a},\delta, p)\sup_{\mathcal{Q}_{R,T}}\|\nabla b\|^{\frac{2}{3}}+\frac{c(\bar{a},\delta,p)[(\Gamma_{\overline{\varphi}})^{+}]}{R}
\end{split}
\end{equation*}
that implies 
\begin{equation*}
\begin{split}
\dfrac{\norm{\gradient{h}}}{q-h}(x,\tau)& \leqslant c(\bar{a},\delta,p)k_{1}^{\frac{1}{2}}+c(\bar{a},\delta,p)k_{2}^{\frac{1}{2}}+\frac{c(\bar{a},\delta,p)}{R}+\frac{c(\bar{a},\delta,p)}{(\tau-t_{0}+T)^{\frac{1}{2}}}\\
&\quad+\frac{c(\bar{a},\delta,p)[(\Gamma_{\overline{\varphi}})^{+}]^{1/2}}{\sqrt{R}}+c(\bar{a},\delta,p)\left(\sup_{\mathcal{Q}_{R,T}}b^{+}\right)^{\frac{1}{2}}+c(\bar{a},\delta, p)\sup_{\mathcal{Q}_{R,T}}\|\nabla b\|^{\frac{1}{3}}\\
&\quad+c(\delta,p)\sup_{\mathcal{Q}_{R,T}}\left\{\left[\left(\alpha-1+\dfrac{p}{q-h}\right)c\right]^{+}\right\}^{1/2}u^{\frac{\alpha-1}{2}}
\end{split}
\end{equation*}
for all $x\in M$ such that $r(x,\tau) \leqslant R/2$. Since $h = p\ln u$, substituting this into the above estimate completes the proof in this case.

\textbf{Case 2:} $r(x_{1},t_{1})<1$. In this case, $\phi$ is a constant in space direction in $\mathcal{Q}_{\frac{R}{2}, T}$ where $R \geqslant 2$. Thus, from \eqref{Eq:07}, we have
\begin{equation*}
\begin{split}
\frac{\bar{a}(q-h)}{p}\phi G^{2}&\leqslant \underbrace{\left(k_{1}+k_{2}\right)\phi G}_{\textup{I}}
+\underbrace{\frac{p\phi\langle\nabla h,\nabla b\rangle}{(q-h)^{2}}}_{\textup{II}}+\underbrace{\frac{pb}{q-h}\phi G}_{\textup{III}} \\
&+\underbrace{\left(\alpha-1+\frac{p}{q-h}\right)cu^{\alpha-1}\phi G}_{\textup{IV}}\\
&\leqslant\dfrac{\bar{a}\delta}{4p}\phi G^{2}+c_1(\bar{a},\delta,p)k_{1}^2+c_2(\bar{a},\delta,p)k_{2}^{2}+c(\bar{a},\delta, p)\sup_{\mathcal{Q}_{R,T}}\|\nabla b\|^{\frac{4}{3}}\\
&\quad +c(\bar{a},p,\delta)\!\left(\sup_{\mathcal{Q}_{R,T}}b^{+}\!\right)^{2}\!\!\!+\!c(\bar{a},\delta,p)\sup_{\mathcal{Q}_{R,T}}\!\left\{\!\left[\!\left(\!\alpha\!-\!1\!+\!\dfrac{p}{q-h}\right)\!c\right]^{+}\!\right\}^{2}\!\!\!u^{2(\alpha-1)}.
\end{split}
\end{equation*}
So,
\begin{equation*}
\begin{split}
\phi G^{2}&\leqslant c_1(\bar{a},\delta,p)k_{1}^2+c_2(\bar{a},\delta,p)k_{2}^{2}+c(\bar{a},\delta, p)\sup_{\mathcal{Q}_{R,T}}\|\nabla b\|^{\frac{4}{3}}+c(\bar{a},p,\delta)\left(\sup_{\mathcal{Q}_{R,T}} b^{+}\right)^{2}\\
&\quad +c(\bar{a},\delta,p)\sup_{\mathcal{Q}_{R,T}}\left\{\left[\left(\alpha-1+\dfrac{p}{q-h}\right)c\right]^{+}\right\}^{2}\!u^{2(\alpha-1)}.
\end{split}
\end{equation*}
Recall that $\phi(x,\tau)=1$ when $r(x,\tau)<R/2$. So,
\begin{align*}
G(x,\tau)&= (\phi G)(x,\tau)
\leqslant (\phi G)(x_{1},t_{1})\\
&\leqslant c_1(\bar{a},\delta,p)k_{1}+c_2(\bar{a},\delta,p)k_{2}+c(\bar{a},\delta, p)\sup_{\mathcal{Q}_{R,T}}\|\nabla b\|^{\frac{2}{3}}+c(\bar{a},p,\delta)\sup_{\mathcal{Q}_{R,T}} b^{+}\\
&\quad +c(\bar{a},\delta,p)\sup_{\mathcal{Q}_{R,T}}\left\{\left[\left(\alpha-1+\dfrac{p}{q-h}\right)c\right]^{+}\right\}u^{\alpha-1}.
\end{align*}
for all $x\in M$ such that $r(x,\tau)<R/2$. By the definition of $G(x,\tau)$, we prove that the estimate in the theorem still holds when $r(x_{1},t_{1})<1$.
\end{proof}

\begin{proof}[\bf Proof of Theorem~\ref{theorem**}]  From Theorem~\ref{thm1}, the function $u=f^{\frac{1}{\sigma}}$ provides a solution of 
\begin{equation}\label{PDE-1}
\begin{split}
\frac{\partial u}{\partial t}=\Delta_{\varphi}u-2m\rho\Delta u+\frac{\rho}{\sigma} S_{g(t)}u
+\frac{m\rho-1}{m\sigma}S_{F
}u^{1-2\sigma}
\end{split}
\end{equation}
on $B^n$. 

We first prove item (b). Suppose that $\overline{g}(t)$ is an ancient or eternal solution with $S_{F}<0$ and $\rho S_{g(t)}<0$. Fix an arbitrary point $(x_{0},t_{0})\in B\times I$, and apply Theorem~\ref{main} to $u(x_{0},t_{0})$ in the set $\mathcal{Q}_{R/2,R}$ with the parameters
\begin{equation*}
a=2m\rho,\ \ b= \frac{\rho}{\sigma}S_{g(t)},
\ \ c=\frac{m\rho-1}{m\sigma}S_{F}, \ \ \alpha=1-2\sigma, \ \ p=1,\ \ q=\frac{1}{\sigma}+\ln D(\mathcal{Q}_{R,R}).
\end{equation*}
This yields
\begin{equation*}
\begin{split}
&\|\nabla \ln u(x_{0},t_{0})\|\leqslant C\left(\frac{1}{\sigma}+\ln D(\mathcal{Q}_{R,R})-\ln u(x_{0},t_{0})\right)\Bigg{\{}\dfrac{1}{R}\!+\!\dfrac{1}{\sqrt{R}}\!+\!\frac{[(\Gamma_{\overline{\varphi}})^{+}]^{\frac{1}{2}}}{R^{\frac{1}{2}}}\\
&\quad\qquad+\sqrt{b^{+}}\!+\!\sup_{\mathcal{Q}_{R,R}}\|\nabla b\|^{\frac{1}{3}}+\sup_{\mathcal{Q}_{R,R}}\sqrt{\left[\left(\alpha-1+\dfrac{p}{q-p\ln u(x_1,t_1)}\right)c\right]^{+}}u^{\frac{\alpha-1}{2}}\Bigg{\}}
\end{split}
\end{equation*}
where $(x_{1},t_{1})$ is a maximum point of $\phi G$  as in the proof of Theorem~\ref{main}. 

Note that $\rho<\frac{1}{2m}\leqslant\frac{1}{m+1}<\frac{1}{m}$. Hence, $\sigma>0$ and then $b<0$ and $c>0$. Moreover
\begin{equation*}
\begin{split}
  \alpha-1+\dfrac{p}{q-p\ln u(x_{1},t_{1})}=-2\sigma+\dfrac{1}{\frac{1}{\sigma}+\ln\left[\frac{D(\mathcal{Q}_{R,R})}{u(x_{1},t_{1})}\right]}\leqslant-\sigma\leqslant0.
\end{split}
\end{equation*}
Since $c>0$, one has
\begin{equation*}\left[\left(\alpha-1+\dfrac{p}{q-p\ln u(x_{1},t_{1})}\right)c\right]^{+}=0.
\end{equation*}
Thus
\begin{equation*}
\begin{split}
\|\nabla\ln u(x_{0},t_{0})\|&\leqslant C\left(\frac{1}{\sigma}+\ln D(\mathcal{Q}_{R,R})-\ln u(x_{0},t_{0})\right)\Bigg{\{}\dfrac{1+[(\Gamma_{\overline{\varphi}})^{+}]^{\frac{1}{2}}}{R^{\frac{1}{2}}}\Bigg{\}}.
\end{split}
\end{equation*}
Since $\|\nabla \varphi\|_{g(t)}\leqslant C_{0}$ for some constant $C_{0}$, we deduce from the Laplacian comparison theorem that 
\[
\Delta_{\bar{\varphi}} r(x,t)
\leqslant\frac{n-1}{r}+\frac{C_{0}}{1-a}.
\]
Therefore, along the geodesic sphere $r(x,t)=1$, one may take
\[
\Gamma_{\bar{\varphi}}
\leqslant n-1+\frac{C_{0}}{1-a},
\]
which shows that $\Gamma_{\bar{\varphi}}$ can be chosen uniformly in $t$, and $\Gamma_{\overline{\varphi}}<\infty$.

Now, the growth condition $\ln f(x,t)=o\left(r(x,t)^{\frac{1}{2}}+|t|^{\frac{1}{2}}\right)$ near infinity implies the analogous bound for $\ln u(x,t)$. Hence,
\begin{equation*}
\begin{split}
\norm{\gradient{\ln u(x_{0},t_{0})}}&\leqslant C\left(\frac{1}{\sigma}+o(R^{\frac{1}{2}})-\ln u(x_{0},t_{0})\right)\Bigg{\{}\dfrac{1+[(\Gamma_{\overline{\varphi}})^{+}]^{\frac{1}{2}}}{R^{\frac{1}{2}}}\Bigg{\}}.
\end{split}
\end{equation*}
Letting $R\to+\infty$ forces $\norm{\gradient{\ln u(x_{0},t_{0})}}=0$. Since the initial choice of $(x_{0},t_{0})$ was arbitrary, it follows that $u(x,t)=u(t)$ for all $x\in B^n$. Hence, from \eqref{PDE-1}, we obtain the ordinary differential equation
\begin{equation*}
u'(t)=\frac{\rho}{\sigma}S_{g(t)}u(t)+\frac{m\rho-1}{m\sigma}S_{F}u(t)^{1-2\sigma}.
\end{equation*}
In particular, when $S_{g(t)}=S_{0}$ is constant in time, integration gives
\begin{equation*}
u(t)^{2\sigma}=Ce^{2\rho S_{0} t}-\left(1-\frac{1}{m\rho}\right)\frac{S_{F}}{S_{0}}.
\end{equation*}
for some constant $C$. We claim that $C=0$. Indeed, if $C\neq 0$, then the exponential term forces $u$ to violate the growth condition when $t\to-\infty$. Hence $u(x,t)$ must be constant, and consequently $f(x,t)$ is constant as well.

Now, for ancient or eternal solution with $S_{F}=0$ and $\rho S_{g(t)}=0$, we obtain the following estimate:
\begin{equation*}
\begin{split}
\norm{\gradient{\ln u(x_{0},t_{0})}}&\leqslant C\left(\frac{1}{\sigma}+o(R^{\frac{1}{2}})-\ln u(x_{0},t_{0})\right)\Bigg{\{}\dfrac{1+[(\Gamma_{\overline{\varphi}})^{+}]^{\frac{1}{2}}}{R^{\frac{1}{2}}}\Bigg{\}}.
\end{split}
\end{equation*}
Letting $R\to+\infty$ yields $\norm{\gradient{\ln u(x_{0},t_{0})}}=0$, and hence
$u(x,t)=u(t)$ for all $x\in B^n$. From \eqref{PDE-1}, we obtain $u'(t)=0$ for all $t$, and hence $u$ is constant and consequently $f$ is constant as well. This completes the proof of item (b).

We now prove item (a). Suppose that $\overline{g}(t)$ is an immortal solution on $(T,\infty)$. For each sufficiently large $t_{0}>T$, set 
$$
R=\frac{t_{0}-T}{2}.
$$
Then $R\geqslant2$ and $t_{0}-R=\frac{t_{0}+T}{2}>T$. Therefore, $\mathcal{Q}_{R,R}\subset B^{n}\times (T,\infty)$. Moreover, $R\to\infty$ as $t_{0}\to \infty$. Using the same choices of $a,b,c, \alpha, p$ and $q$ together with the sign arguments established above, we obtain
\begin{equation*}
\begin{split}
\|\nabla\ln u(x_{0},t_{0})\|&\leqslant C\left(\frac{1}{\sigma}+o(R^{\frac{1}{2}})-\ln u(x_{0},t_{0})\right)\Bigg{\{}\dfrac{1+[(\Gamma_{\overline{\varphi}})^{+}]^{\frac{1}{2}}}{R^{\frac{1}{2}}}\Bigg{\}}\\
&\leqslant C\left(\frac{1}{\sigma}+o(R^{\frac{1}{2}})+|\ln u(x_{0},t_{0})|\right)\Bigg{\{}\dfrac{1+[(\Gamma_{\overline{\varphi}})^{+}]^{\frac{1}{2}}}{R^{\frac{1}{2}}}\Bigg{\}}\\
&\leqslant C\left(\frac{1}{\sigma}+o(R^{\frac{1}{2}})+o(R^{\frac{1}{2}})\right)\Bigg{\{}\dfrac{1+[(\Gamma_{\overline{\varphi}})^{+}]^{\frac{1}{2}}}{R^{\frac{1}{2}}}\Bigg{\}}
\end{split}
\end{equation*}
Letting $t_{0}\to\infty$, we have $R\to\infty$ and using $\ln f=\sigma \ln u$, we conclude that
$$
\lim_{t_{0}\to\infty}\|\nabla \ln f(x_{0},t_{0})\|_{g(t_{0})}=0.
$$
\end{proof}

\begin{proof}[\bf Proof of Theorem~\ref{theorem***}]
We proceed by contradiction. Suppose that there exists a solution satisfying the assumptions of the theorem. By Theorem~\ref{thm1}, the function $u=f^{\frac{1}{\sigma}}$ satisfies
\begin{equation}\label{flow1}
\begin{split}
\frac{\partial u}{\partial t}=\Delta_{\varphi}u-2m\rho\Delta u+\frac{\rho}{\sigma} S_{g(t)}u
+\frac{m\rho-1}{m\sigma}S_{F
}u^{1-2\sigma}.
\end{split}
\end{equation}
Analogously to the proof of Theorem~\ref{theorem**}, we can show that $\nabla \ln u(\cdot,t)=0$ if either of the sign assumptions of Theorem~\ref{theorem***} holds. Thus, there exists a positive function $U:I\rightarrow (0,\infty)$ such that $u(x,t)=U(t)$. Consequently, from~\eqref{flow1}, the function $F(t):=U(t)^{\sigma}$ satisfies
$$
\frac{d}{d t} F(t)^2=2 \rho S_{g(t)} F(t)^2+\frac{2(m \rho-1)}{m} S_F.
$$
Suppose first that $S_F=0$. Then
$$
\frac{F^{\prime}(t)}{F(t)}=\rho S_{g(t)} \leqslant-\kappa.
$$
For $t \leqslant t_0$, we have
$$
\ln F(t)=\ln F\left(t_0\right)-\int_t^{t_0} \rho S_{g(s)} d s \geqslant \ln F\left(t_0\right)+\kappa\left(t_0-t\right).
$$
Therefore, in the ancient case,
$$
\frac{\ln F(t)}{|t|^{1 / 2}} \longrightarrow+\infty \quad \text { as } t \rightarrow-\infty
$$
which again contradicts the growth assumption. The eternal case is also impossible because it contains both temporal directions.

Suppose now that $S_F<0$ and $\rho S_{g(t)}=0$. Then
$$
\frac{d}{d t} F(t)^2=\frac{2(m \rho-1)}{m} S_F.
$$
Since $1-2 m \rho>0$, we have $m \rho-1<0$. Therefore, $2(m \rho-1)S_F/m>0$. After integration,
$$
F(t)^2=F\left(t_0\right)^2+\frac{2(m \rho-1)}{m} S_F\left(t-t_0\right).
$$
As $t \rightarrow-\infty$, the right-hand side goes to $-\infty$, contradicting the positivity of $F(t)^2$. Therefore, no ancient or eternal solution exists.

In the immortal case, assuming that $S_F=0$ and $\rho S_{g(t)}=0$, an argument analogous to that used in the proof of Theorem~\ref{theorem**} yields $\lim_{t\to\infty}\|\nabla \ln u(x,t)\|=0$ for $x\in B$. Moreover, from
\begin{equation*}
    \frac{\partial^2}{\partial t^{2}}\left(\|\nabla \ln f(x,t)\|_{g(t)}^{2}\right)\leqslant0\ \ \mbox{on} \ \ B\times I
\end{equation*}
the function $t\longmapsto \|\nabla \ln u(x,t)\|_{g(t)}^{2}$ is nonnegative and concave for each fixed $x\in B$. Its convergence to zero as $t\to\infty$ therefore implies that it vanishes identically. Consequently,
 $\nabla \ln u(x,t)=0$  and then $u(x,t)=U(t)$ for some positive function $U$. Substituting this expression into \eqref{flow1}, we obtain $U'(t)=0$. Thus, $u$ is constant, and consequently $f$ is constant, which contradicts the assumptions.
\end{proof}

\section{\bf Acknowledgments}
Willian I. Tokura thanks the Department of Mathematics at the Institute of Mathematical and Computer Sciences (ICMC) for organizing the 2026 edition of the ICMC Summer Meeting on Differential Equations, during which part of this work was carried out. He is also grateful to Alessandra Verri (UFSCar) for her hospitality. José N.V. Gomes has been partially supported by Conselho Nacional de Desenvolvimento Científico e Tecnológico (CNPq), Grant 309825/2026-1, and Fundação de Amparo à Pesquisa do Estado de São Paulo (FAPESP), Grants 2024/00923-6, 2023/11126-7 and 2022/16097-2. Hikaru Yamamoto has been partially supported by JSPS KAKENHI Grant-in-Aid for Early-Career Scientists 22K13909.

\bibliography{ref/main}

\bibliographystyle{plain}

\end{document}